\pgfplotsset{
  table/search path={figs},
}
\title{The Indefinite Proximal Gradient Method}
\author{%
  Geoffroy Leconte\footnote{%
    GERAD and Department of Mathematics and Industrial Engineering, Polytechnique Montr\'eal. E-mail: \href{mailto:geoffroy.leconte@polymtl.ca}{geoffroy.leconte@polymtl.ca}.
  }
  \and
  Dominique Orban\footnote{%
    GERAD and Department of Mathematics and Industrial Engineering, Polytechnique Montr\'eal. E-mail: \href{mailto:dominique.orban@gerad.ca}{dominique.orban@gerad.ca}.
  }
  \thanks{Research supported by an NSERC Discovery grant.}
}
\dedicatory{In honor and memory of our colleague and friend Daniela di Serafino.}
\begin{document}
    \maketitle
    \thispagestyle{mytitlepage}

    \begin{abstract}
      We introduce a variant of the proximal gradient method in which the quadratic term is diagonal but may be indefinite, and is safeguarded by a trust region.
      Our method is a special case of the proximal quasi-Newton trust-region method of \citet{aravkin-baraldi-orban-2022}.
      We provide closed-form solution of the step computation in certain cases where the nonsmooth term is separable and the trust region is defined in the infinity norm, so that no iterative subproblem solver is required.
      Our analysis expands upon that of \citep{aravkin-baraldi-orban-2022} by generalizing the trust-region approach to problems with bound constraints.
      We provide an efficient open-source implementation of our method, named TRDH, in the Julia language in which Hessians approximations are given by diagonal quasi-Newton updates.
      TRDH evaluates one standard proximal operator and one indefinite proximal operator per iteration.
      We also analyze and implement a variant named iTRDH that performs a single indefinite proximal operator evaluation per iteration.
      We establish that iTRDH enjoys the same asymptotic worst-case iteration complexity as TRDH.
      We report numerical experience on unconstrained and bound-constrained problems, where TRDH and iTRDH are used both as standalone and subproblem solvers.
      Our results illustrate that, as standalone solvers, TRDH and iTRDH improve upon the quadratic regularization method R2 of \citep{aravkin-baraldi-orban-2022} but also sometimes upon their quasi-Newton trust-region method, referred to here as TR-R2, in terms of smooth objective value and gradient evaluations.
      On challenging nonnegative matrix factorization, binary classification and data fitting problems, TRDH and iTRDH used as subproblem solvers inside TR improve upon TR-R2 for at least one choice of diagonal approximation.
    \end{abstract}


    \pagestyle{myheadings}

    \section{Introduction}

    We consider the nonsmooth regularized problem
    \begin{equation}%
      \label{eq:nlp}
      \minimize{x \in \R^n} \ f(x) + h(x)
      \quad \st \ \ell \leq x \leq u,
    \end{equation}
    where \(\ell \in (\R \cup \{-\infty\})^n\), \(u \in (\R \cup \{+\infty\})^n\) with \(\ell \leq u\) componentwise, \(f: \R^n \to \R\) is continuously differentiable on an open set containing \([\ell, u]\), and \(h: \R^n \to \R \cup \{+\infty\}\) is proper and lower semicontinuous (lsc).
    A component \(\ell_i = -\infty\) or \(u_i = +\infty\) indicates that \(x_i\) is unbounded below or above, respectively.
    Both \(f\) and \(h\) may be nonconvex.
    Typically, \(h\) is nonsmooth and serves to identify a local minimizer of \(f\) with desirable features, such as sparsity.

    Numerical methods for~\eqref{eq:nlp} are typically based on the proximal-gradient method \citep{lions-mercier-1979}.
    \citet{aravkin-baraldi-orban-2022} provide an overview of recent works focusing on~\eqref{eq:nlp} where both \(f\) and \(h\) may be nonconvex, to which we refer the reader.
    In addition, they propose two methods: an adaptive quadratic regularization approach named R2, which may be viewed as a proximal-gradient method with adaptive step size, and a quasi-Newton trust-region method named TR in which the subproblem consist in minimizing a quadratic approximation of \(f\) about the current iterate regularized by a model of \(h\) inside a trust region.
    Typically, an explicit solution to the subproblem is not known, and an iterative procedure must be used---the authors use R2.
    \citet{aravkin-baraldi-orban-2022b} develop a similar approach designed for applications where \(f\) is a least-squares residual.
    They propose a trust-region and a regularization approach.
    Again, they use R2 as subproblem solver.

    In both R2 and the traditional proximal-gradient method, a uniformly positively scaled \(\nabla f\) is used to compute a step, and that computation relies on the proximal operator---see \Cref{sec:background} for precise definitions.
    Some authors consider positive-definite diagonal gradient scaling---see below.
    In the present research, we contend that generalizing the definition of the proximal operator by allowing a diagonal scaling of \(\nabla f\), and even permitting negative scaling factors, continues to allow us to derive analytical solutions for several nonsmooth terms of interest in applications.
    Moreover, such potentially indefinite scaling might allow the model to better capture inherent nonconvexity in \(f\) about the current iterate.
    We devise a trust-region method based on those ideas and name it TRDH, which stands for \emph{trust-region method with diagonal Hessian approximations}.
    At every iteration, our method performs the evaluation of both a classical proximal operator and a generalized proximal operator with indefinite diagonal scaling.
    However, it is possible to modify TRDH slightly to devise a variant that requires a single generalized proximal operator evaluation per iteration while preserving the asymptotic worst-case evaluation complexity bound.
    We name the variant iTRDH, which stands for \emph{indefinite trust-region method with diagonal Hessian}.

    Diagonal gradient scaling could be referred to as diagonal quasi-Newton, and though the literature appears to be thin on that subject, there exist a few references.
    Diagonal Hessian approximations range from a multiple of the identity, as in the traditional proximal-gradient or the spectral gradient method, to approximations computed based on a relaxed secant equation.
    We go into more details in \Cref{sec:quasi-Newton}.

    Because we use the \(\ell_\infty\)-norm to define the trust region, we are able to treat bound constraints naturally, by taking the intersection of \([\ell, \, u]\) with the trust region.
    Our generalized \emph{indefinite} proximal operators must take the indicator of the resulting box into account together with \(h\), or a model thereof.

    A by-product of the present research is an efficient software implementation of TRDH and iTRDH, both as standalone solvers, and as subproblem solvers for use inside TR\@.
    TRDH and iTRDH can use several diagonal Hessian approximations: the multiple of the identity given by the spectral gradient approximation, and two diagonal quasi-Newton approximations based on the weak secant equation.
    Our open-source implementations in Julia are available from \url{https://github.com/JuliaSmoothOptimizers/RegularizedOptimization.jl}.

    We report numerical experience conducted with TRDH, iTRDH, TR-TRDH and TR-iTRDH on a range of unconstrained and bound-constrained problems, where TR-TRDH and TR-iTRDH refer to TRDH and iTRDH used as subsolvers in TR.
    Our general conclusion is that for at least one choice of a diagonal Hessian approximation, TRDH and iTRDH outperform R2 in terms of evaluations of \(f\) and \(\nabla f\).
    In addition, for at least one choice of a diagonal Hessian approximation, TR-TRDH and TR-iTRDH outperform TR-R2 on the same metric.
    Our detailed results are in \Cref{sec:numerical}.

    \subsection*{Related research}

    Most of the literature focuses on positive-definite diagonal scaling of the proximal operator;
    \citet{becker-fadili-2012} and \citet{becker-fadili-ochs-2019} consider positive-definite quasi-Newton approximations of the form \(H = D + V V^T\) where \(D\) is positive definite and diagonal, and a specialized procedure to solve the proximal quasi-Newton subproblems.
    Under their assumptions, \(H\) remains uniformly bounded.
    Their analysis is restricted to \(f\) and \(h\) convex and does not provide complexity bounds.

    \citet{duchi-hazan-2011} present the ADAGRAD algorithm, which is commonly used in online learning.
    ADAGRAD is a stochastic algorithm but a deterministic implementation of it would minimize a smooth objective such as \(f\) using a variant of the projected gradient algorithm, that can be seen as a special case of the proximal gradient method, using a positive diagonal gradient scaling.
    At iteration \(k\), the scaling is set to \(\diag (G_k)^{-1/2}\) where \(G_k = \sum_{j=0}^k \nabla f(x_j) \nabla f(x_j)^T\).

    \citet{scheinberg-tang-2016} also focus on positive-definite and uniformly-bounded Hessian approximations for \(f\) and \(h\) convex.
    Their numerical results employ a limited-memory BFGS approximation, although the latter is liable to grow unbounded \citep[\S\(8.4\)]{conn-gould-toint-2000}.
        The proximal quasi-Newton subproblem is solved inexactly with a coordinate descent algorithm.

    \citet{aravkin-baraldi-orban-2022} propose a proximal quasi-Newton trust-region method for nonconvex \(f\) and \(h\) under weak assumptions, accompanied by a complexity analysis.
    They employ limited-memory BFGS or SR1 approximations and solve subproblems using the proximal-gradient method or R2.
    Though their analysis assumes the Hessian approximations are uniformly bounded, there are known procedures to estimate bounds on the eigenvalues of quasi-Newton approximations after an update, and those bounds can be used to skip or modify the update to keep it bounded \citep{lotfi-bonniot-orban-lodi-2020}.

    Similarly, \citet{baraldi-kouri-2022} present a proximal trust-region method with inexact objective and gradient evaluations, under the additional assumption that \(h\) must be convex.
    They show numerical results with exact Hessian when it is available, and employ iterative solvers to approximate it otherwise.

    \subsection*{Notation}

    \(\B\) is the unit ball centered at the origin and defined by a norm dictated by the context, and \(\Delta \B\) is the ball of radius \(\Delta > 0\) centered at the origin.
    For fixed \(x \in \R^n\), the ball of radius \(\Delta\) centered at \(x\) is \(x + \Delta \B\).
    When it is necessary to indicate that \(\B\) is defined by the \(\ell_p\)-norm, for \(1 \leq p \leq +\infty\), we write \(\B_p\).
    For \(A \subseteq \R^n\), the indicator of \(A\) is \(\chi(\cdot \mid A): \R^n \to \R \cup \{+\infty\}\) defined as \(\chi(x \mid A) = 0\) if \(x \in A\) and \(+\infty\) otherwise.
    If \(A \neq \varnothing\), \(\chi(\cdot \mid A)\) is proper.
    If \(A\) is closed, \(\chi(\cdot \mid A)\) is lsc.
    For \(D \in \R^{m \times n}\) with elements \(\delta_{i,j}\), \(|D| \in \R^{m \times n}\) has elements \(|\delta_{i,j}|\).
    For a finite set \(A \subset \N\), we denote \(|A|\) its cardinality.
    If \(f_1\) and \(f_2\) are two positive functions of \(\epsilon > 0\), we say that \(f_1(\epsilon) = O(f_2(\epsilon))\) if there exists a constant \(C > 0\) such that \(f_1(\epsilon) \leq C f_2(\epsilon)\) for all \(\epsilon > 0\) sufficiently small.

    \section{Background}%
    \label{sec:background}

    The following are standard variational analysis concepts---see, e.g., \citep{rockafellar-wets-1998}.
    Let \(\phi: \R^n \to \widebar{\R}\) and \(\bar{x} \in \R^n\) where \(\phi\) is finite.
    The Fr\'echet subdifferential of \(\phi\) at \(\bar{x}\) is the closed convex set \(\widehat{\partial} \phi(\bar{x})\) of \(v \in \R^n\) such that
    \[
      \liminf_{\substack{x \to \bar{x} \\ x \neq \bar{x}}} \frac{\phi(x) - \phi(\bar{x}) - v^T (x - \bar{x})}{\|x - \bar{x}\|} \geq 0.
    \]
    The limiting subdifferential of \(\phi\) at \(\bar{x}\) is the closed, but not necessarily convex, set \(\partial \phi(\bar{x})\) of \(v \in \R^n\) for which there exist \(\{x_k\} \to \bar{x}\) and \(\{v_k\} \to v\) such that \(\{\phi(x_k)\} \to \phi(\bar{x})\) and \(v_k \in \widehat{\partial} \phi(x_k)\) for all \(k\).
    \(\widehat{\partial} \phi(\bar{x}) \subset \partial \phi(\bar{x})\) always holds.

    The horizon subdifferential of \(\phi\) at \(\bar{x}\) is the closed, but not necessarily convex, cone \(\partial^{\infty} \phi(\bar{x})\) of \(v \in \R^n\) for which there exist \(\{x_k\} \to \bar{x}\), \(\{v_k\}\) and \(\{\lambda_k\} \downarrow 0\) such that \(\{\phi(x_k)\} \to \phi(\bar{x})\), \(v_k \in \widehat{\partial} \phi(x_k)\) for all \(k\), and \(\{\lambda_k v_k\} \to v\).

    If \(C \subseteq \R^n\) and \(\bar{x} \in C\), the closed convex cone \(\widehat{N}_C(\bar{x}) := \widehat{\partial} \chi(\bar{x} \mid C)\) is called the regular normal cone to \(C\) at \(\bar{x}\).
    The closed cone \(N_C(\bar{x}) := \partial \chi(\bar{x} \mid C) = \partial^{\infty} \chi(\bar{x} \mid C)\) is called the normal cone to \(C\) at \(\bar{x}\).
    \(\widehat{N}_C(\bar{x}) \subseteq N_C(\bar{x})\) always holds, and is an equality if \(C\) is convex.

    If \(\phi: \R^n \to \widebar{\R}\) is proper lsc, and \(C \subseteq \R^n\) is closed, we say that the \emph{constraint qualification} is satisfied at \(\bar{x} \in C\) for the constrained problem
    \[
      \minimize{x \in \R^n} \ \phi(x) \quad \st \ x \in C
    \]
    if
    \begin{equation}%
      \label{eq:cq}
      \partial^{\infty} \phi(\bar{x}) \cap N_C(\bar{x}) = \{0\}.
    \end{equation}
    As an example where~\eqref{eq:cq} fails to hold, let \(n = 1\), \(f(x) = 0\) for all \(x \in \R\),
    \[
      h(x) =
      \begin{cases}
        x & \text{if } x \geq 0, \\
        +\infty & \text{if } x < 0,
      \end{cases}
    \]
    and \(C = [0, \, 1]\).
    Clearly, \(\bar{x} = 0\) is the only stationary point and \(N_C(\bar{x}) = \{v \mid v \leq 0\}\).
    Let \(\epi h\) denote the epigraph of \(h\), i.e., the set \(\{(x, t) \mid x \in \R^n, \ t \geq h(x)\}\).
    Because \(h\) is locally lsc about \(\bar{x}\), \citep[Theorem~\(8.9\)]{rockafellar-wets-1998} yields
    \[
      \partial^{\infty} h(\bar{x}) = \{ v \mid (v, 0) \in N_{\epi h}(\bar{x}, h(\bar{x}))\}
      = \{ v \mid (v, 0) \in N_{\epi h}(0, 0)\}
      = N_C(\bar{x}),
    \]
    and the constraint qualification does not hold.
    The reason is that \(h\) and \(\chi(\cdot \mid C)\) both have a jump discontinuity at \(\bar{x}\) and take the value \(+\infty\) for \(x < \bar{x}\), which is akin to repeating a constraint.

    We say that \(\bar{x}\) is first-order stationary for~\eqref{eq:nlp} if \(0 \in \partial(f + h + \chi(\cdot \mid [\ell, u]))(\bar{x}) = \nabla f(\bar{x}) + \partial (h + \chi(\cdot \mid [\ell, u]))(\bar{x})\).
    If \(\bar{x}\) is a local solution of~\eqref{eq:nlp}, it is first-order stationary \citep[Theorem~\(10.1\)]{rockafellar-wets-1998}.
    Under~\eqref{eq:cq} applied to~\eqref{eq:nlp}, which reads \((\nabla f(\bar{x}) + \partial^{\infty} h(\bar{x})) \cap N_{[\ell, u]}(\bar{x}) = \{0\}\), the necessary optimality condition can be written equivalently as \(0 \in \nabla f(\bar{x}) + \partial h(\bar{x}) + N_{[\ell, u]}(\bar{x})\) \citep[Theorem~\(8.15\)]{rockafellar-wets-1998}.
    When~\eqref{eq:nlp} is unconstrained or \(\bar{x} \in \interior [\ell, u]\), the normal cone is \(\{0\}\), the constraint qualification is satisfied, and the necessary condition reduces to \(0 \in \nabla f(\bar{x}) + \partial h(\bar{x})\).

    The proximal operator associated with \(h\) is
    \begin{equation}%
      \label{eq:def-prox}
      \prox_{\nu h}(q) := \argmin{x} \ \tfrac{1}{2} \nu^{-1} \|x - q\|_2^2 + h(x),
    \end{equation}
    where \(\nu > 0\) is a preset steplength.
    If \(h\) is prox-bounded and \(\nu > 0\) is sufficiently small, \(\prox_{\nu h}(q)\) is a nonempty and closed set.
    It may contain multiple elements.

    The proximal gradient method \citep{lions-mercier-1979} is a generalization of the gradient method that takes the nonsmooth term into account.
    It generates iterates \(\{x_k\}\) according to
    \begin{equation}%
      \label{eq:pg-iter-basic}
      x_{k+1} \in \prox_{\nu_k h}(x_k - \nu_k \nabla f(x_k)).
    \end{equation}
    Equivalently, and more instructively,
    \begin{subequations}%
      \label{eq:pg-iter}
      \begin{align}
        s_k & \in \argmin{s} \ \tfrac{1}{2} \nu_k^{-1} \|s + \nu_k \nabla f(x_k)\|_2^2 + h(x_k + s), \label{eq:pg-iter-s}
        \\  & = \argmin{s} \ \nabla f(x_k)^T s + \tfrac{1}{2} \nu_k^{-1} \|s\|_2^2 + h(x_k + s),   \label{eq:pg-iter-model}
        \\ x_{k+1} & = x_k + s_k,
      \end{align}
    \end{subequations}
    where it becomes clear that the computation of \(s_k\) results from formulating a simple quadratic model
    \begin{equation}%
      \label{eq:q-model}
      f(x_k) + \nabla f(x_k)^T s + \tfrac{1}{2} \nu_k^{-1} \|s\|_2^2 \approx
      f(x_k + s)
    \end{equation}
    of the smooth term \(f\), to which we add the nonsmooth term \(h\).

    The effectiveness of~\eqref{eq:pg-iter} comes from the observation that it is possible to determine a closed form solution \(s_k\), or at least a specialized procedure to identify such a global minimizer, for numerous choices of \(h\) that are relevant in applications \citep{beck-2017}, without resort to a general-purpose optimization solver.

    \citet{aravkin-baraldi-orban-2022} devise a trust-region quasi-Newton method for~\eqref{eq:nlp} without explicit bound constraints in which subproblems are solved inexactly by a method closely related to~\eqref{eq:pg-iter}.
    However, the subproblem has the additional trust-region constraint \(s \in \Delta_k \B\):
    \begin{equation}%
      \label{eq:pg-iter-s-tr}
      s_k \in \argmin{s} \ \nabla f(x_k)^T s + \tfrac{1}{2} \nu_k^{-1} \|s\|_2^2 + h(x_k + s) + \chi(s \mid \Delta_k \B),
    \end{equation}
    where \(\Delta_k > 0\), which may be construed as changing \(h(x_k + s)\) to \(h(x_k + s) + \chi(s \mid \Delta_k \B)\).
    Because both \(s \mapsto h(x_k + s)\) and \(s \mapsto \chi(s \mid \Delta \B)\) are proper and lsc, so is their sum.
    In addition, because \(\Delta_k \B\) is bounded, \(h\) no longer needs to be prox-bounded for a solution to exist.
    Again, a closed form solution may be found for a variety of cases of interest in applications \citep{aravkin-baraldi-orban-2022,aravkin-baraldi-orban-2022b}.

    \section{The indefinite proximal operator}

    \subsection{Definition and Examples}

    We restrict ourselves to the case where \(h\) is separable, i.e., \(h(x) = h_1(x_1) + \cdots + h_n(x_n)\), in which case the search for an element \(s\) of~\eqref{eq:q-model} decouples componentwise:
    \[
      s_i \in \argmin{s_i} \ g_i s_i + \tfrac{1}{2} \nu^{-1} s_i^2 + h_i(x_i + s_i),
    \]
    where \(g_i\) is the \(i\)-th component of \(\nabla f(x_k)\).
    If \(\B\) is defined in the \(\ell_\infty\)-norm,~\eqref{eq:pg-iter-s-tr} also decouples.
    Without loss of generality, and accounting for the bound constraints of~\eqref{eq:nlp}, we write it as
    \begin{equation}%
      \label{eq:prox-decoupled-tr}
      s_i \in \argmin{s_i} \ g_i s_i + \tfrac{1}{2} \nu^{-1} s_i^2 + h_i(x_i + s_i) + \chi(s_i \mid [\tilde{\ell}_i, \, \tilde{u}_i]),
    \end{equation}
    where \(\tilde{\ell}_i := \max(\ell_i - x_i, -\Delta)\) and \(\tilde{u}_i := \min(u_i - x_i, \Delta)\) are the \(i\)th components of \(\tilde{\ell}\) and \(\tilde{u} \in \R^n\), which describe the intersection of the box \([\ell, u]\) with the trust region.

    Replacing the quadratic coefficient in~\eqref{eq:prox-decoupled-tr} by a positive scalar \(\delta_i > 0\) does not complicate the problem as it simply amounts to changing the value of \(\nu\).
    In the following, we contend that replacing the quadratic coefficient by \(\delta_i \leq 0\) continues to allow us to derive a closed form solution in certain cases.
    Observe that modifying the quadratic term as described amounts to changing~\eqref{eq:q-model} to
    \[
      f(x_k) + \nabla f(x_k)^T s + \tfrac{1}{2} s^T D_k s,
    \]
    where \(D_k = \diag(d_k)\), and \(d_k \in \R^n\) has components \(\delta_{k.i}\).
    Crucially, we wish to allow \(D_k\) to be indefinite to capture any nonconvexity of \(f\) about \(x_k\) to a certain extent.
    While such change may make the quadratic model nonconvex, the indicator of \([\tilde{\ell}_i, \, \tilde{u}_i]\) ensures that a finite solution exists.

    \begin{definition}%
      \label{def:iprox}
      Let \(g \in \R^n\), \(D \in \R^{n \times n}\) be diagonal and \(C \subset \R^n\) be nonempty.
      The indefinite proximal operator of \(h\) with respect to \(g\), \(D\) and \(C\) is
      \begin{equation}%
        \label{eq:def-iprox}
        \iprox_{g, D, h, C}(q) := \argmin{x} \ g^T x + \tfrac{1}{2} x^T D x + h(x) + \chi(x \mid C).
      \end{equation}
    \end{definition}

    The usual proximal operator of \(\nu h\) at \(q \in \R^n\) is a special case of \Cref{def:iprox} in which \(C = \R^n\), \(D = \nu^{-1} I\), and \(g = -\nu^{-1} q\).

    \begin{example}%
      \label{ex:iprox-l0}
      Consider \(h(x) = \lambda \|x\|_0\), where \(\lambda > 0\), and \(C := [\ell, \, u]\), where \(\ell\), \(u \in \R^n\) with \(\ell < u\) componentwise.
      The computation of \(x\) in~\eqref{eq:def-iprox} decouples componentwise as
      \[
        x_i \in \argmin{x_i} g_i x_i + \tfrac{1}{2} \delta_i x_i^2 + \lambda |x_i|_0 + \chi(x_i \mid [\ell_i, \, u_i]).
      \]
      Note that
      \[
        g_i x_i + \tfrac{1}{2} \delta_i x_i^2 + \lambda |x_i|_0 =
        \begin{cases}
          0 & \text{if } x_i = 0 \\
          \lambda + g_i x_i + \tfrac{1}{2} \delta_i x_i^2 & \text{if } x_i \neq 0.
        \end{cases}
      \]
      If \(\delta_i = 0\), the solution set of the above problem is
      \begin{itemize}
        \item \([\ell_i, u_i]\) if \(g_i = 0\) and either \(\ell_i > 0\) or \(u_i < 0\);
        \item \(\argmin{} \{\lambda + g_i x_i \mid x_i \in M_i\}\) where \(M_i\) is the set \(\{\ell_i, u_i\}\) together with \(0\) if \(0 \in (\ell_i, u_i)\) in all other cases.
      \end{itemize}

      If \(\delta_i \neq 0\), the solution set is \(\argmin{} \{\lambda + g_i x_i + \tfrac{1}{2} \delta_i x_i^2 \mid x_i \in M_i\}\), where \(M_i\) is the finite set comprising \(\ell_i\), \(u_i\) together with
      \begin{itemize}
        \item \(-g_i / \delta_i\) if \(\delta_i > 0\);
        \item \(0\) if \(0 \in (\ell_i, u_i)\).
      \end{itemize}
      See \Cref{fig:iprox-l0} for an illustration of a few cases.
    \end{example}

    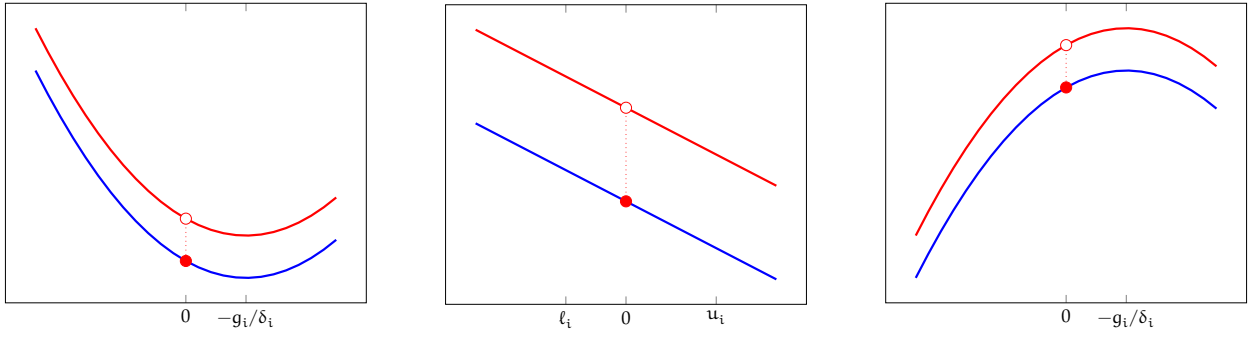
\begin{figure}[t]
      \centering
      \begin{tikzpicture}[scale=.7]
        \begin{axis}[
            xtick = {0, 2},
            xticklabels = {\(0\), \(-g_i / \delta_i\)},
            ytick = \empty,
          ]
          \addplot[blue, very thick] {(x - 2)^2 + 0.5};
          \addplot[red, very thick] {(x - 2)^2 + 10.5};
          \draw [red, dotted] (0, 14.5) -- (0, 4.5);
          \draw [red, fill=white] (0, 14.5) circle (3pt);
          \filldraw[red] (0, 4.5) circle (3pt);
        \end{axis}
      \end{tikzpicture}
      \hfill
      \begin{tikzpicture}[scale=.7]
        \begin{axis}[
            xtick = {-2, 0, 3},
            xticklabels = {\(\ell_i\), 0, \(u_i\)},
            ytick = \empty,
          ]
          \addplot[blue, very thick] {-x / 3};
          \addplot[red, very thick] {- x / 3 + 2};
          \draw [red, dotted] (0, 0) -- (0, 2);
          \draw [red, fill=white] (0, 2) circle (3pt);
          \filldraw[red] (0, 0) circle (3pt);
        \end{axis}
      \end{tikzpicture}
      \hfill
      \begin{tikzpicture}[scale=.7]
        \begin{axis}[
            xtick = {0, 2},
            xticklabels = {\(0\), \(-g_i / \delta_i\)},
            ytick = \empty,
          ]
          \addplot[blue, very thick] {-(x - 2)^2};
          \addplot[red, very thick] {-(x - 2)^2 + 10};
          \draw [red, dotted] (0, 6) -- (0, -4);
          \draw [red, fill=white] (0, 6) circle (3pt);
          \filldraw[red] (0, -4) circle (3pt);
        \end{axis}
      \end{tikzpicture}
      \caption{%
        \label{fig:iprox-l0}
        Illustration of \Cref{ex:iprox-l0} with \(\delta_i > 0\) (left), \(\delta_i = 0\) (center), and \(\delta_i < 0\) (right).
        The blue curve is \(g_i x_i + \tfrac{1}{2} \delta_i x_i^2\) while the red curve is \(g_i x_i + \tfrac{1}{2} \delta_i x_i^2 + \lambda |x_i|_0\).
        }
    \end{figure}

    \begin{example}%
      \label{ex:iprox-l1}
      Consider \(h(x) = \lambda \|x\|_1\) where \(\lambda > 0\), and \(C\) as in \Cref{ex:iprox-l0}.
      The computation of \(x\) in~\eqref{eq:def-iprox} decouples componentwise as
      \[
        x_i \in \argmin{x_i} g_i x_i + \tfrac{1}{2} \delta_i x_i^2 + \lambda |x_i| + \chi(x_i \mid [\ell_i, \, u_i]).
      \]
      Note that
      \[
        g_i x_i + \tfrac{1}{2} \delta_i x_i^2 + \lambda |x_i| =
        \begin{cases}
          (g_i + \lambda) x_i + \tfrac{1}{2} \delta_i x_i^2 & \text{if } x_i \geq 0 \\
          (g_i - \lambda) x_i + \tfrac{1}{2} \delta_i x_i^2 & \text{if } x_i < 0.
        \end{cases}
      \]

      If \(\delta_i = 0\), the solution of the above problem is
      \begin{itemize}
        \item \([\ell_i, \, \max(0, \ell_i)]\) if \(g_i = \lambda\);
        \item \([\min(u_i, 0), \, u_i]\) if \(g_i = -\lambda\);
        \item \(\proj(0 \mid [\ell_i, u_i])\) if \(0 \leq |g_i| < \lambda\);
        \item \(\{\ell_i\}\) if \(g_i > \lambda\);
        \item \(\{u_i\}\) if \(g_i < -\lambda\).
      \end{itemize}

      Consider now the case where \(\delta_i > 0\).
      The branch of quadratic for \(x_i \geq 0\) lies above the horizontal axis if \(g_i \geq -\lambda\), and that for \(x_i < 0\) does the same if \(g_i \leq \lambda\).
      Thus, if \(|g_i| \leq \lambda\), the unique unconstrained minimizer is \(0\) and, by convexity, the solution of the constrained problem is \(\proj(0 \mid [\ell_i, u_i])\).

      If \(g_i > \lambda\), for the unconstrained minimizer is \(\bar{x}_i := -(g_i - \lambda) / \delta_i < 0\).
      By convexity, the solution of the constrained problem is \(\proj(\bar{x}_i \mid [\ell_i, u_i])\).
      The situation is similar when \(g_i < -\lambda\).

      Consider finally the case where \(\delta_i < 0\).
      In this case, the unique constrained minimizer is $\argmin{} \{g_i x_i + \tfrac{1}{2} \delta_i x_i^2 + \lambda |x_i| \mid x_i \in M_i\}\) where \(M_i\) is the set containing \(\ell_i\) and \(u_i\) together with \(0\) if the latter lies inside \([\ell_i, u_i]\).

      \Cref{fig:iprox-l1} illustrates a few representative cases.
    \end{example}

    \begin{figure}[t]
      \centering
      \begin{tikzpicture}[scale=.7]
        \begin{axis}[
            xtick = {0},
            xticklabels = {0},
            ytick = \empty,
          ]
          \addplot[blue, very thick] {0.5 * x + abs(x)};
        \end{axis}
      \end{tikzpicture}
      \hfill
      \begin{tikzpicture}[scale=.7]
        \begin{axis}[
            xtick = {0},
            xticklabels = {0},
            ytick = \empty,
          ]
          \addplot[blue, very thick] {x + abs(x)};
        \end{axis}
      \end{tikzpicture}
      \hfill
      \begin{tikzpicture}[scale=.7]
        \begin{axis}[
            xtick = {0},
            xticklabels = {0},
            ytick = \empty,
          ]
          \addplot[blue, very thick] {2 * x + abs(x)};
        \end{axis}
      \end{tikzpicture}
      \\
      \begin{tikzpicture}[scale=.7]
        \begin{axis}[
            xtick = {0, 2},
            xticklabels = {0, \(\bar{x}_i\)},
            ytick = \empty,
          ]
          \addplot[blue, very thick] {-3 * x + 0.5 * x^2 + abs(x)};
        \end{axis}
      \end{tikzpicture}
      \hfill
      \begin{tikzpicture}[scale=.7]
        \begin{axis}[
            xtick = {0},
            xticklabels = {0},
            ytick = \empty,
          ]
          \addplot[blue, very thick] {0.5 * x + 0.5 * x^2 + abs(x)};
        \end{axis}
      \end{tikzpicture}
      \hfill
      \begin{tikzpicture}[scale=.7]
        \begin{axis}[
            xtick = {-2, 0},
            xticklabels = {\(\bar{x}_i\), 0},
            ytick = \empty,
          ]
          \addplot[blue, very thick] {3 * x + 0.5 * x^2 + abs(x)};
        \end{axis}
      \end{tikzpicture}
      \\
      \begin{tikzpicture}[scale=.7]
        \begin{axis}[
            xtick = {0},
            xticklabels = {0},
            ytick = \empty,
          ]
          \addplot[blue, very thick] {-x - 0.5 * x^2 + abs(x)};
        \end{axis}
      \end{tikzpicture}
      \hfill
      \begin{tikzpicture}[scale=.7]
        \begin{axis}[
            xtick = {0},
            xticklabels = {0},
            ytick = \empty,
          ]
          \addplot[blue, very thick] {0.2 * x - 0.5 * x^2 + abs(x)};
        \end{axis}
      \end{tikzpicture}
      \hfill
      \begin{tikzpicture}[scale=.7]
        \begin{axis}[
            xtick = {0},
            xticklabels = {0},
            ytick = \empty,
          ]
          \addplot[blue, very thick] {x - 0.5 * x^2 + abs(x)};
        \end{axis}
      \end{tikzpicture}
      \caption{%
        \label{fig:iprox-l1}
        Illustration of \Cref{ex:iprox-l1}.
        Top row: \(\delta_i = 0\).
        From left to right: \(0 \leq g_i < \lambda\), \(g_i = \lambda\), and \(g_i > \lambda\).
        Middle row: \(\delta_i > 0\).
        From left to right: \(g_i < -\lambda\), \(|g_i| \leq \lambda\), and \(g_i > \lambda\).
        Bottom row: \(\delta_i < 0\).
        From left to right: \(g_i \leq -\lambda\), \(|g_i| < \lambda\), and \(g_i > \lambda\).
        }
    \end{figure}
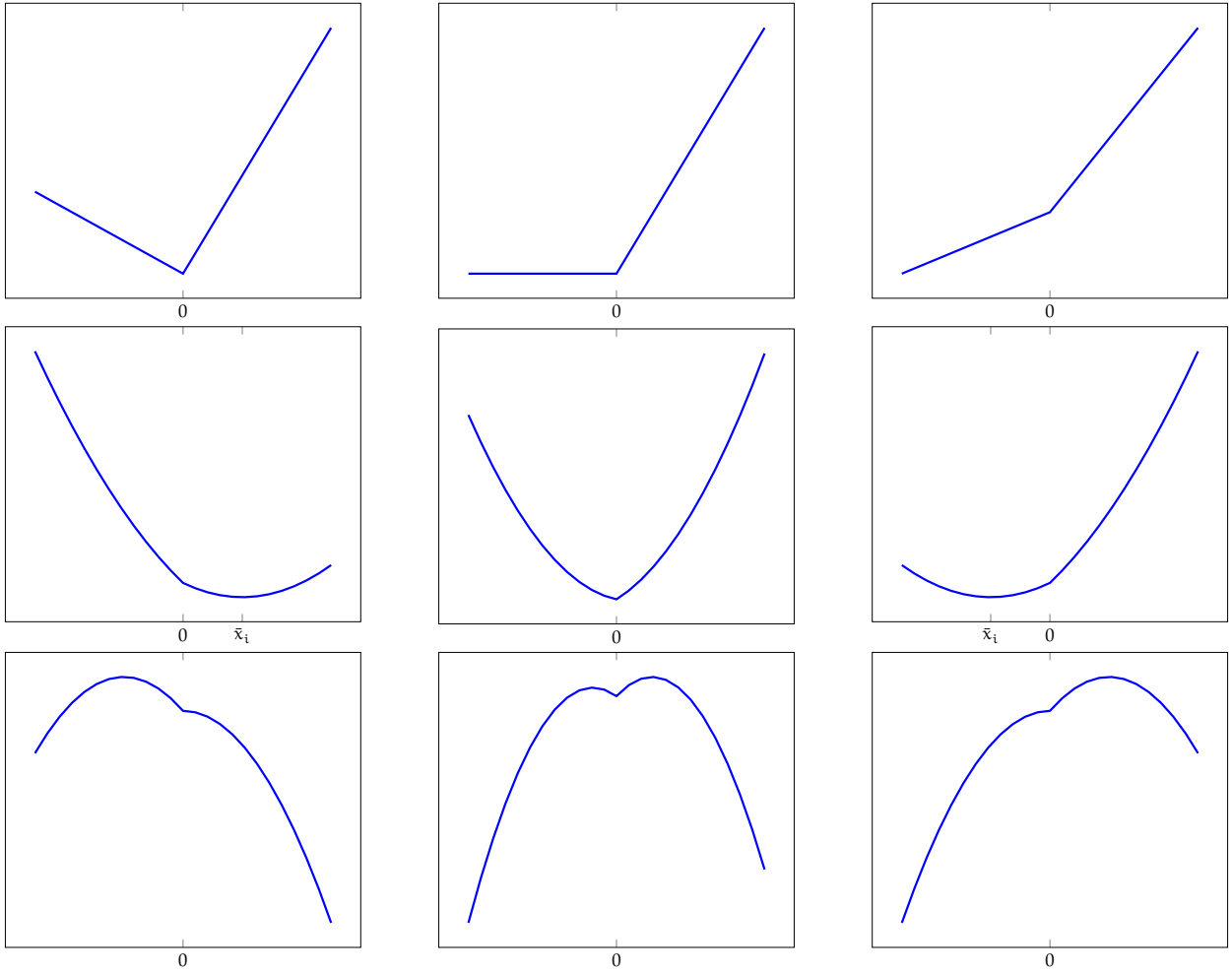

    \subsection{Properties}

    At \(x \in \R^n\) where \(h\) is finite, consider models
    \begin{subequations}%
      \label{eq:def-models}
      \begin{align}
        \varphi(s; x) & \phantom{:}\approx f(x + s) \label{eq:def-varphi} \\
        \psi(s; x) & \phantom{:}\approx h(x + s) \label{eq:def-psi} \\
        m(s; x) & := \varphi(s; x) + \psi(s; x). \label{eq:def-m}
      \end{align}
    \end{subequations}

    We make the following assumption on the models~\eqref{eq:def-models}.

    \begin{modelassumption}%
      \label{asm:models}
      For any \(x \in \R^n\), \(\varphi(\cdot; x) \in \mathcal{C}^1\), and satisfies \(\varphi(0; x) = f(x)\) and \(\nabla \varphi(0; x) = \nabla f(x)\).
      For any \(x \in \R^n\) where \(h\) is finite, \(\psi(\cdot; x)\) is proper lsc, and satisfies \(\psi(0; x) = h(x)\) and \(\partial \psi(0; x) = \partial h(x)\).
    \end{modelassumption}

    \begin{proposition}%
      \label{prop:iprox-stationarity}
      Let \(C \subset \R^n\) be nonempty and compact, and let \Cref{asm:models} be satisfied.
      Let~\eqref{eq:nlp} satisfy the constraint qualification~\eqref{eq:cq} at \(x \in C\).
      Assume \(0 \in \argmin{s} m(s; x) + \chi(x + s \mid C)\), and let the latter subproblem satisfy the constraint qualification~\eqref{eq:cq} at \(s = 0\).
      Then \(x\) is first-order stationary for~\eqref{eq:nlp}.
    \end{proposition}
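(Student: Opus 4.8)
The plan is to read off, from the hypothesis that \(s = 0\) minimizes the model subproblem, a stationarity condition at \(s = 0\) and then transport it to a stationarity condition for~\eqref{eq:nlp} at \(x\) using the Model Assumption and the two constraint qualifications. First I would apply Fermat's rule \citep[Theorem~\(10.1\)]{rockafellar-wets-1998} to \(0 \in \argmin{s} \ m(s; x) + \chi(x + s \mid C)\) to get \(0 \in \partial[m(\cdot; x) + \chi(x + \cdot \mid C)](0)\). Since \(m(\cdot; x) = \varphi(\cdot; x) + \psi(\cdot; x)\) with \(\varphi(\cdot; x) \in \mathcal{C}^1\), the horizon subdifferential of \(m(\cdot; x)\) at \(0\) equals that of \(\psi(\cdot; x)\), so, using the translation invariance \(N_{C - x}(0) = N_C(x)\), the subproblem constraint qualification~\eqref{eq:cq} at \(s = 0\) reads \(\partial^{\infty} \psi(0; x) \cap N_C(x) = \{0\}\). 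Under this qualification, the reasoning behind the equivalence recorded in \Cref{sec:background} \citep[Theorem~\(8.15\)]{rockafellar-wets-1998}, together with the exact sum rule \(\partial[\varphi(\cdot; x) + \psi(\cdot; x)](0) = \nabla \varphi(0; x) + \partial \psi(0; x)\) for a \(\mathcal{C}^1\) summand, yields \(0 \in \nabla \varphi(0; x) + \partial \psi(0; x) + N_C(x)\).

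Next I would invoke the Model Assumption, namely \(\nabla \varphi(0; x) = \nabla f(x)\) and \(\partial \psi(0; x) = \partial h(x)\), to obtain \(0 \in \nabla f(x) + \partial h(x) + N_C(x)\). It then remains to replace \(N_C(x)\) by \(N_{[\ell, u]}(x)\). This is immediate if \(C = [\ell, u]\), and in the trust-region setting \(C = [\ell, u] \cap (x + \Delta \B)\) it holds because \(x\) lies in the interior of \(x + \Delta \B\), so \(C\) and \([\ell, u]\) coincide on a neighborhood of \(x\) and their normal cones, being local objects, agree. Finally, applying the constraint qualification~\eqref{eq:cq} for~\eqref{eq:nlp} at \(x\) and the equivalence of \Cref{sec:background} in the reverse direction, the inclusion \(0 \in \nabla f(x) + \partial h(x) + N_{[\ell, u]}(x)\) is precisely \(0 \in \partial(f + h + \chi(\cdot \mid [\ell, u]))(x)\), that is, \(x\) is first-order stationary for~\eqref{eq:nlp}.

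I expect the main obstacle to be the correctly oriented use of the two constraint qualifications: the subproblem qualification is what detaches \(N_C(x)\) from the model's subdifferential, while the qualification for~\eqref{eq:nlp} is what recombines \(\partial h(x)\) and \(N_{[\ell, u]}(x)\) into the single limiting subdifferential \(\partial(f + h + \chi(\cdot \mid [\ell, u]))(x)\); invoking the \Cref{sec:background} equivalence at both ends keeps the two inclusions pointing the right way. A secondary but essential point is the identification \(N_C(x) = N_{[\ell, u]}(x)\): without \(C\) coinciding with \([\ell, u]\) near \(x\) the conclusion can fail---for instance a degenerate \(C = \{x\}\) would make \(s = 0\) vacuously optimal while leaving \(x\) non-stationary---so the positivity of the trust-region radius is used implicitly.
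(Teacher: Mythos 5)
Your proof is correct and follows essentially the same route as the paper's: Fermat's rule at \(s=0\), the sum rule under the subproblem constraint qualification to pass to \(0 \in \nabla f(x) + \partial h(x) + N_C(x)\) via \Cref{asm:models}, and the constraint qualification for~\eqref{eq:nlp} to read this inclusion as first-order stationarity. The only difference is that you spell out two points the paper's terse proof leaves implicit, namely the translation \(N_{C-x}(0) = N_C(x)\) and the local identification \(N_C(x) = N_{[\ell,u]}(x)\) when \(C = [\ell, u] \cap (x + \Delta \B)\) with \(\Delta > 0\), which is a worthwhile clarification rather than a departure.
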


    \begin{proof}
      Let \(\chi_{x, C}(s) := \chi(x + s \mid C)\).
      Because \(C\) is compact, \Cref{asm:models} ensures that \(s \mapsto m(s; x) + \chi(x + s \mid C)\) is lsc, bounded below and that a minimizer exists.
      By assumption,
      \(
      0 \in \nabla f(x) + \partial (\psi + \chi_{x, C})(0)
      \).
      By the constraint qualification for the subproblem and \Cref{asm:models},
      \(
      0 \in \nabla f(x) + \partial h(x) + N_C(x)
      \),
      which are the first-order optimality conditions for~\eqref{eq:nlp}.
    \end{proof}

    In the assumptions of \Cref{prop:iprox-stationarity}, satisfaction of the constraint qualification in the subproblem is ensured if \(\partial^{\infty} \psi(0; x) = \partial^{\infty} h(x)\) provided~\eqref{eq:cq} is satisfied at \(x\) for~\eqref{eq:nlp}.





    \section{Diagonal quasi-Newton methods}%
    \label{sec:quasi-Newton}

    The simplest possible diagonal approximation to \(\nabla^2 f(x_k)\) is \(B_k := \sigma_k I\) for some scalar \(\sigma_k\), where \(I\) denotes the identity.
    In the R2 algorithm \citep[Algorithm~\(6.1\)]{aravkin-baraldi-orban-2022}, \(\sigma_k\) is chosen adaptively at each iteration based on progress.
    A more sophisticated strategy consists in choosing \(\sigma_k\) so as to best approximate \(\nabla^2 f(x_k)\), in a sense to be defined, based on recently observed local information.
    That is the logic behind the spectral projected gradient method \citep{birgin-martinez-raydan-2014}, which is easily generalized as a spectral \emph{proximal} gradient method, and where
    \[
      \sigma_k := s_{k-1}^T y_{k-1} / s_{k-1}^T s_{k-1},
      \quad
      s_{k-1} := x_k - x_{k-1},
      \quad
      y_{k-1} := \nabla f(x_k) - \nabla f(x_{k-1}).
    \] 
    This choice of \(\sigma_k\) arises from the secant equation \(\sigma_k s_{k-1} = y_{k-1}\), which, in general, has no solution, but can be solved in the least-squares sense by minimizing \(\|\sigma s_{k-1} - y_{k-1}\|_2\) in terms of \(\sigma\).

    \citet{gilbert-lemarechal-1989} experiment, among others, with diagonal updates of quasi-Newton approximations.
    However, there may be no diagonal solution \(B_k\) to the secant equation \(B_k s_{k-1} = y_{k-1}\), and one must resort to a different approach to update a diagonal approximation.
    \citet{dennis-wolkowicz-1993} introduce the \emph{weak secant} equation \(s_{k-1}^T B_k s_{k-1} = s_{k-1}^T y_{k-1}\) and accompanying update formulae.
    \citet{nazareth-1995} observes that some of those proposed updates can be constrained to update only the diagonal of \(B_k\) but that the resulting \(B_k\) no longer necessarily satisfies the weak secant equation.

    \citet{zhu-nazareth-wolkowicz-1999} supplement the weak secant equation by additionally requiring that \(B_k\) be diagonal, and call the resulting set of conditions the \emph{quasi-Cauchy} conditions.
    In particular, the spectral gradient approximation is the only solution of the quasi-Cauchy conditions that is a multiple of \(I\).
    They derive updates from variational principles resembling those used in classic quasi-Newton updates.
    The first update sets \(B_k\) to the unique solution of
    \begin{equation}%
      \label{eq:qc-indef-problem}
      \minimize{B} \ \|B - B_{k-1}\|_F \quad \st \ s_{k-1}^T B s_{k-1} = s_{k-1}^T y_{k-1},
    \end{equation}
    which is the counterpart of the Powell-symmetric-Broyden (PSB) variational problem.
    The solution of~\eqref{eq:qc-indef-problem} is shown to be
    \begin{equation}%
      \label{eq:qc-indef-soln}
      B_k = B_{k-1} + \frac{s_{k-1}^T (y_{k-1} - B_{k-1} s_{k-1})}{\trace(S_{k-1}^4)} \, S_{k-1}^2,
      \quad
      S_{k-1} := \diag(s_{k-1}).
    \end{equation}
    Like the PSB update,~\eqref{eq:qc-indef-soln} does not possess the hereditary positive-definiteness property.
    However, capturing negative curvature is a desirable feature in trust-region methods.

    Motivated by linesearch methods, \citet{zhu-nazareth-wolkowicz-1999} devise a second strategy in which they update \(B_k^{1/2}\), and that results in hereditary positive definiteness.
    The main idea is analogous to~\eqref{eq:qc-indef-problem} with the objective replaced with \(\|B^{1/2} - B_{k-1}^{1/2}\|_F\).
    The update depends on the root of a scalar nonlinear equation, and is therefore more costly to perform than~\eqref{eq:qc-indef-soln}.

    \citet{andrei-2019} suggests an alternative variational problem in which \(B_k\) is determined as the solution of
    \begin{equation}%
      \label{eq:qc-andrei-problem}
      \minimize{B} \ \tfrac{1}{2} \|B - B_{k-1}\|_F^2 + \trace(B) \quad \st \ s_{k-1}^T B s_{k-1} = s_{k-1}^T y_{k-1},
    \end{equation}
    where minimizing the trace of \(B\) tends to cluster its eigenvalues.
    The solution of~\eqref{eq:qc-andrei-problem} is shown to be
    \begin{equation}%
      \label{eq:qc-andrei-soln}
      B_k = B_{k-1} + \frac{s_{k-1}^T (y_{k-1} + s_{k-1} - B_{k-1} s_{k-1})}{\trace(S_{k-1}^4)} \, S_{k-1}^2 - I.
    \end{equation}
    Again,~\eqref{eq:qc-andrei-soln} does not preserve positive definiteness.
    Also motivated by linesearch methods, \citet{andrei-2019} employs a thresholding strategy that ensures computation of a descent direction.

    Though \citet{nazareth-1995} appears to have been motivated by derivative-free methods,\footnote{In the sense that the quasi-Cauchy condition only requires the gradient via \(s^T y\).} diagonal quasi-Newton updates are good candidate approximations for use in first-order methods for regularized optimization, and we are not aware of any prior work using them.

    \section{The indefinite proximal gradient method}

    \subsection{Description of the algorithm and convergence properties}
    \label{sec:trdh}

    For \(\nu > 0\), define
    \begin{subequations}%
      \label{eq:def-model-nu}
      \begin{align}
        \varphi_{\textup{cp}}(s; x) & := f(x) + \nabla f(x)^T s,
        \label{eq:def-phi-nu} \\
        m(s; x, \nu) & := \varphi_{\textup{cp}}(s; x) + \tfrac{1}{2} \nu^{-1} \|s\|_2^2 + \psi(s; x).
      \end{align}
    \end{subequations}
    Guided by \Cref{prop:iprox-stationarity}, we compute a first step denoted \(s_{k,1}\) such that
    \begin{equation}%
      \label{eq:ipg-iter-sk1}
      s_{k,1} \in \argmin{s} \ m(s; x_k, \nu_k) + \chi(x_k + s \mid [\ell, \, u] \cap  (x_k + \Delta_k \B)),
    \end{equation}
    for an appropriate value of \(\nu_k > 0\).

    Let
    \begin{equation}%
      \label{eq:def-model-quad}
        m(s; x, D)  := \varphi_{\textup{cp}}(s; x) + \tfrac{1}{2} s^T D s + \psi(s; x),
    \end{equation}
    where \(D\) is a diagonal matrix.
    The indefinite proximal gradient iteration with trust region for~\eqref{eq:nlp} is defined by changing~\eqref{eq:ipg-iter-sk1} to
    \begin{equation}%
      \label{eq:ipg-iter-s-tr}
      s_k \in \argmin{s} \ m(s; x_k, D_k) + \chi(x_k + s \mid [\ell, \, u] \cap  (x_k + \Delta_k \B)),
    \end{equation}
    where \(D_k\) is diagonal, and by updating \(x_k\) and \(\Delta_k\) as is customary in trust-region methods \citep{conn-gould-toint-2000}.
    Among other possible choices, we focus on the case where \(D_k\) results from a diagonal quasi-Newton update.

    We summarize the entire procedure as \Cref{alg:tr-nonsmooth-diag}, which is a special case of \citep[Algorithm~\(3.1\)]{aravkin-baraldi-orban-2022}.

    \begin{algorithm}[ht]
      \caption[caption]{%
        Nonsmooth trust-region algorithm with diagonal Hessian.%
        \label{alg:tr-nonsmooth-diag}
        }
        \begin{algorithmic}[1]
          \State Choose constants
          \[
            0 < \eta_1 \leq \eta_2 < 1,
            \quad
            0 < 1/\gamma_3 \leq \gamma_1 \leq \gamma_2 < 1 < \gamma_3 \leq \gamma_4, \quad \text{and} \quad
            \alpha > 0, \
            \beta \geq 1.
          \]
          \State Choose \(x_0 \in \R^n\) where \(h\) is finite, \(\Delta_0  > 0\), compute \(f(x_0) + h(x_0)\).
          \State Choose \(d_0 \in \R^n\) 
          and set \(D_0 = \diag(d_0)\).
          \For{\(k = 0, 1, \dots\)}
          \State Set \(\nu_k := 1 / (\|d_k\|_{\infty} + \alpha^{-1} \Delta_k^{-1})\). \label{step:vk}
          \State Define \(\varphi(s; x_k)\) and \(\psi(s; x_k)\) according to \Cref{asm:models}.
          \State Define \(m(s; x_k, \nu_k)\) as in~\eqref{eq:def-model-nu} and compute \(s_{k,1}\) as in~\eqref{eq:ipg-iter-sk1}. \label{step:sk1}
          \State Define \(m(s; x_k, D_k)\) as in~\eqref{eq:def-model-quad} and compute a solution \(s_k\) of~\eqref{eq:ipg-iter-s-tr} with \(\Delta_k\) replaced by \(\min (\Delta_k, \, \beta \|s_{k,1}\|)\). \label{step:sk}
          \State Compute the ratio
          \[
            \rho_k :=
            \frac{%
              f(x_k) + h(x_k) - (f(x_k + s_k) + h(x_k + s_k))
              }{
                m(0; x_k, D_k) - m(s_k; x_k, D_k)
              }.
            \]
            \State If \(\rho_k \geq \eta_1\), set \(x_{k+1} = x_k + s_k\).
            Otherwise, set \(x_{k+1} = x_k\).
            \State Choose \(d_{k+1} \in \R^n\) 
            and set \(D_{k+1} = \diag(d_{k+1})\).
            \State Update the trust-region radius according to
            \[
              \Delta_{k+1} \in
              \left\{
                \begin{array}{lll}
                  {[\gamma_3 \Delta_k, \, \gamma_4 \Delta_k]} &
                  \text{ if } \rho_k \geq \eta_2, &
                  \text{(very successful iteration)}
                  \\ {[\gamma_2 \Delta_k, \, \Delta_k]} &
                  \text{ if } \eta_1 \leq \rho_k < \eta_2, &
                  \text{(successful iteration)}
                  \\ {[\gamma_1 \Delta_k, \, \gamma_2 \Delta_k]} &
                  \text{ if } \rho_k < \eta_1 &
                  \text{(unsuccessful iteration).}
                \end{array}
                \right.
              \]
              \EndFor
        \end{algorithmic}
    \end{algorithm}

    In \citep[Algorithm~\(3.1\)]{aravkin-baraldi-orban-2022}, convergence hinges crucially on the fact that the decrease in \(m(\cdot; x_k, D_k)\) achieved by \(s_k\) is at least a fraction of the decrease in \(m(\cdot; x_k, \nu_k)\) achieved by \(s_{k,1}\), the first proximal gradient step with a well-chosen step size \(\nu_k > 0\).
    At iteration \(k\) of \Cref{alg:tr-nonsmooth-diag}, \(\nu_k < 1 / \|d_k\|_{\infty}\).
    Note that the choice \(d_k = 0\) is allowed.

    In \Cref{alg:tr-nonsmooth-diag}, the computation of \(s_{k,1}\) serves two purposes.
    The first is as stopping condition by \Cref{prop:iprox-stationarity}, and the second is to set the trust-region radius in the subproblem for \(s_k\).
    This is at variance with \citep[Algorithm~\(3.1\)]{aravkin-baraldi-orban-2022}, where, in addition, \(s_k\) is computed by continuing the proximal gradient iterations from \(s_{k,1}\).


    In a similar notation to that of \citep{aravkin-baraldi-orban-2022}, let
    \begin{subequations}%
      \label{eq:def-xicp-xi}
      \begin{align}
        \xi_{\textup{cp}}(\Delta_k; x_k, \nu_k) & := f(x_k) + h(x_k) - (\varphi_{\textup{cp}}(s_{k,1}; x_k) + \psi(s_{k,1}; x_k)), \label{eq:def-xicp} \\
        \xi(\Delta_k; x_k, D_k) & := f(x_k) + h(x_k) - m(s_k; x_k, D_k)
      \end{align}
    \end{subequations}
    denote the optimal model decrease for~\eqref{eq:def-model-nu} and~\eqref{eq:def-model-quad}.
    The definition of~\eqref{eq:ipg-iter-sk1} guarantees that
    \begin{equation}
      f(x_k) + h(x_k) = m(0; x_k, \nu_k) \ge m(s_{k,1}; x_k, \nu_k) = \varphi_{\textup{cp}}(s_{k,1}; x_k) + \psi(s_{k,1}; x_k) + \tfrac{1}{2}\nu_k^{-1}\|s_{k,1}\|^2,
    \end{equation}
    which implies
    \begin{equation}%
      \label{eq:suff-decrease}
      \xi_{\textup{cp}}(\Delta_k; x_k, \nu_k) \geq \tfrac{1}{2} \nu_k^{-1} \|s_{k,1}\|^2
      = \tfrac{1}{2} (\|d_k\|_{\infty} + \alpha^{-1}\Delta_k^{-1}) \|s_{k,1}\|^2
      \ge \tfrac{1}{2} \alpha^{-1} \Delta_k^{-1} \|s_{k,1}\|^2.
    \end{equation}
    If \(\xi_{\textup{cp}}(\Delta_k; x_k, \nu_k) = 0\), we obtain \(s_{k,1} = 0\).
    \Cref{prop:iprox-stationarity} then implies that \(x_k\) is first-order stationary provided~\eqref{eq:nlp} satisfies the constraint qualification at \(x_k\) and~\eqref{eq:def-model-nu} satisfies it at \(s = 0\).

    To establish convergence properties of \Cref{alg:tr-nonsmooth-diag}, we require \citeauthor{aravkin-baraldi-orban-2022}'s Step Assumption~\(3.1\), recalled in the following assumption for convenience.
    \begin{stepassumption}%
      \label{asm:step-assumption}
      There exists \(\kappa_{\textup{m}} > 0\) and \(\kappa_{\textup{mdc}} \in (0, 1)\) such that for all \(k\),
      \begin{subequations}
        \begin{align}
          |f(x_k + s_k) + h(x_k + s_k) - (\varphi(s_k; x_k, D_k) + \psi(s_k; x_k))| &\leq \kappa_{\textup{m}} \|s_k\|_2^2, \label{eq:asm-diff-model-predict}\\
          \xi(\Delta_k; x_k, D_k) &\ge \kappa_{\textup{mdc}} \xi_{\textup{cp}}(\Delta_k; x_k; \nu_k) \label{eq:asm-mdc}.
        \end{align}
      \end{subequations}
    \end{stepassumption}

    Among other situations, \eqref{eq:asm-diff-model-predict} is satisfied if \(\{D_k\}\) is bounded, \(\nabla f\) is Lipschitz continuous, and we select \(\psi(s; x_k) = h(x_k + s)\).
    The following proposition gives a sufficient condition for which~\eqref{eq:asm-mdc} is satisfied.
    \begin{proposition}
      \label{prop:mdc}
      If
      \begin{equation}
        \label{eq:dk-ineq}
        \|d_k\|_{\infty} \le (\kappa_{\textup{mdc}}^{-1} - 1) \alpha^{-1} \Delta_k^{-1},
      \end{equation}
      then~\eqref{eq:asm-mdc} is satisfied.
    \end{proposition}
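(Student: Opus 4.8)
The plan is to exploit the optimality of $s_k$ in its own subproblem by comparing its model value against that of the feasible point $s_{k,1}$, and then to control the discrepancy between the two models---one using the quadratic term $\tfrac{1}{2}\nu_k^{-1}\|s\|_2^2$ and the other the possibly indefinite $\tfrac{1}{2}s^T D_k s$. The role of the hypothesis~\eqref{eq:dk-ineq} will be to keep that discrepancy below a controllable fraction of $\xi_{\textup{cp}}$.

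First I would check that $s_{k,1}$ is feasible for the subproblem~\eqref{eq:ipg-iter-s-tr} defining $s_k$ at Step~\ref{step:sk}. Since $s_{k,1}$ solves~\eqref{eq:ipg-iter-sk1}, it satisfies $x_k + s_{k,1} \in [\ell, u]$ and $\|s_{k,1}\| \le \Delta_k$. Because $\beta \ge 1$ gives $\|s_{k,1}\| \le \beta \|s_{k,1}\|$, we obtain $\|s_{k,1}\| \le \min(\Delta_k, \beta\|s_{k,1}\|)$, which is exactly the radius used for $s_k$; hence $s_{k,1}$ lies in the feasible set of the $s_k$ subproblem. Optimality of $s_k$ then yields $m(s_k; x_k, D_k) \le m(s_{k,1}; x_k, D_k)$, so that, after expanding $m(s_{k,1}; x_k, D_k)$ from~\eqref{eq:def-model-quad} and recognizing the non-quadratic part as $\xi_{\textup{cp}}$,
\[
  \xi(\Delta_k; x_k, D_k) \ge f(x_k) + h(x_k) - m(s_{k,1}; x_k, D_k) = \xi_{\textup{cp}}(\Delta_k; x_k, \nu_k) - \tfrac{1}{2} s_{k,1}^T D_k s_{k,1}.
\]

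Next I would bound the quadratic remainder using the one-sided estimate $\delta_{k,i} \le \|d_k\|_{\infty}$, which gives $\tfrac{1}{2} s_{k,1}^T D_k s_{k,1} \le \tfrac{1}{2}\|d_k\|_{\infty}\|s_{k,1}\|_2^2$, and then invoke the sufficient-decrease bound~\eqref{eq:suff-decrease} in the stronger form $\tfrac{1}{2}\|s_{k,1}\|_2^2 \le \xi_{\textup{cp}}(\Delta_k; x_k, \nu_k)/(\|d_k\|_{\infty} + \alpha^{-1}\Delta_k^{-1})$, which also shows $\xi_{\textup{cp}} \ge 0$. Combining these two estimates collapses everything into a single scalar factor:
\[
  \xi(\Delta_k; x_k, D_k) \ge \left(1 - \frac{\|d_k\|_{\infty}}{\|d_k\|_{\infty} + \alpha^{-1}\Delta_k^{-1}}\right)\xi_{\textup{cp}}(\Delta_k; x_k, \nu_k) = \frac{\alpha^{-1}\Delta_k^{-1}}{\|d_k\|_{\infty} + \alpha^{-1}\Delta_k^{-1}}\,\xi_{\textup{cp}}(\Delta_k; x_k, \nu_k).
\]
It then remains to verify that the hypothesis~\eqref{eq:dk-ineq} is precisely the rearrangement of $\alpha^{-1}\Delta_k^{-1}/(\|d_k\|_{\infty} + \alpha^{-1}\Delta_k^{-1}) \ge \kappa_{\textup{mdc}}$, namely $(1-\kappa_{\textup{mdc}})\alpha^{-1}\Delta_k^{-1} \ge \kappa_{\textup{mdc}}\|d_k\|_{\infty}$; dividing by $\kappa_{\textup{mdc}}$ recovers~\eqref{eq:dk-ineq} and delivers~\eqref{eq:asm-mdc}.

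The proof is essentially a clean chain of inequalities, so there is no deep obstacle; the points requiring care are mostly bookkeeping. The subtlest step is the feasibility argument, where the factor $\beta \ge 1$ is exactly what guarantees $s_{k,1}$ remains admissible for the tightened radius $\min(\Delta_k, \beta\|s_{k,1}\|)$---without it the comparison $m(s_k; x_k, D_k) \le m(s_{k,1}; x_k, D_k)$ could fail. I would also be careful to use the one-sided bound $\delta_{k,i} \le \|d_k\|_{\infty}$ (valid since we only need an \emph{upper} bound on a possibly negative quantity), and to keep the $\ell_2$-norm consistent between~\eqref{eq:suff-decrease} and the quadratic-term estimate, since the trust-region norm used for feasibility need not be the $\ell_2$-norm appearing in the model.
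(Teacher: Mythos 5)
Your proposal is correct and follows essentially the same route as the paper's proof: compare $m(s_k; x_k, D_k)$ with $m(s_{k,1}; x_k, D_k)$ using the feasibility of $s_{k,1}$, bound $\tfrac{1}{2}s_{k,1}^T D_k s_{k,1}$ by $\tfrac{1}{2}\|d_k\|_\infty\|s_{k,1}\|_2^2$, and close the argument with the sufficient-decrease bound~\eqref{eq:suff-decrease}. The only difference is cosmetic---you fold the estimate into a single multiplicative factor $\alpha^{-1}\Delta_k^{-1}/(\|d_k\|_\infty+\alpha^{-1}\Delta_k^{-1}) \ge \kappa_{\textup{mdc}}$, whereas the paper verifies the equivalent subtractive inequality $(1-\kappa_{\textup{mdc}})\xi_{\textup{cp}} \ge \tfrac{1}{2}(\nu_k^{-1}-\alpha^{-1}\Delta_k^{-1})\|s_{k,1}\|^2$---and your explicit feasibility check of $s_{k,1}$ for the tightened radius makes precise what the paper leaves implicit in ``by definition of $s_k$.''
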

    \begin{proof}
      By definition of \(s_k\),
      \begin{align*}
        m(s_k; x_k, D_k) & \leq m(s_{k,1}; x_k, D_k) \\
        & = \varphi(s_{k,1}; x_k) + \tfrac{1}{2} s_{k,1}^T D_k s_{k,1} + \psi(s_{k,1}; x_k) \\
        & \leq \varphi(s_{k,1}; x_k) + \tfrac{1}{2} \Vert d_k \Vert_{\infty} \Vert s_{k,1} \Vert^2 + \psi(s_{k,1}; x_k) \\
        & = \varphi(s_{k,1}; x_k) + \tfrac{1}{2} (\nu_k^{-1} - \alpha^{-1} \Delta_k^{-1}) \Vert s_{k,1} \Vert^2 + \psi(s_{k,1}; x_k),
      \end{align*}
      which leads to
      \begin{equation*}
        \xi(\Delta_k; x_k, D_k) \geq \xi_{\textup{cp}}(\Delta_k; x_k, \nu_k) - \tfrac{1}{2} (\nu_k^{-1} - \alpha^{-1} \Delta_k^{-1}) \|s_{k,1}\|^2.
      \end{equation*}
      For~\eqref{eq:asm-mdc} to be satisfied, it is sufficient to show that
      \begin{equation*}
        \xi_{\textup{cp}}(\Delta_k; x_k, \nu_k) - \tfrac{1}{2} (\nu_k^{-1} - \alpha^{-1} \Delta_k^{-1}) \|s_{k,1}\|^2 \ge \kappa_{\textup{mdc}} \xi_{\textup{cp}}(\Delta_k; x_k, \nu_k),
      \end{equation*}
      that we rewrite as
      \begin{equation}
        \label{eq:ineq-xi-mdc}
        (1 - \kappa_{\textup{mdc}}) \xi_{\textup{cp}}(\Delta_k; x_k, \nu_k) \geq \tfrac{1}{2} (\nu_k^{-1} - \alpha^{-1} \Delta_k^{-1}) \|s_{k,1}\|^2.
      \end{equation}
      Let \(\|d_k\|_{\infty} \le (\kappa_{\textup{mdc}}^{-1} - 1) \alpha^{-1} \Delta_k^{-1}\), which is equivalent to \(\nu_k = \dfrac{1}{\|d_k\|_{\infty} + \alpha^{-1} \Delta_k^{-1}} \ge \kappa_{\textup{mdc}} \alpha \Delta_k\).
      Since \(\kappa_{\textup{mdc}} \in (0, 1)\), we have
      \begin{equation*}
        (1 - \kappa_{\textup{mdc}}) \tfrac{1}{2} \nu_k^{-1} \ge \tfrac{1}{2}(\nu_k^{-1} - \alpha^{-1} \Delta_k^{-1}).
      \end{equation*}
      By multiplying by \(\|s_{k,1}\|^2\) and recalling that \(\xi_{\textup{cp}}(\Delta_k; x_k, \nu_k) \ge \tfrac{1}{2}\nu_k^{-1}\|s_{k,1}\|^2\), we get
      \begin{equation*}
        (1 - \kappa_{\textup{mdc}}) \xi_{\textup{cp}}(\Delta_k; x_k, \nu_k) \ge (1 - \kappa_{\textup{mdc}}) \tfrac{1}{2} \nu_k^{-1} \|s_{k,1}\|^2 \ge \tfrac{1}{2}(\nu_k^{-1} - \alpha^{-1} \Delta_k^{-1}) \|s_{k,1}\|^2,
      \end{equation*}
      so that~\eqref{eq:ineq-xi-mdc} is satisfied.
    \end{proof}

    We can always choose \(\kappa_{\textup{mdc}}\) small enough to ensure that \(\|d_k\|_{\infty}\) does not have to be too close to zero if \(\Delta_k\) does not grow unbounded.
    Whenever \Cref{alg:tr-nonsmooth-diag} generates infinitely many very successful iterations, we could define a \(\Delta_{\max}\) from which we stop increasing \(\Delta_k\).
    We point out that \Cref{prop:mdc} only gives a sufficient condition for~\eqref{eq:asm-mdc} to be satisfied, but it is not necessary.
    Therefore, for the rest of this paper, we base our analysis on~\eqref{eq:asm-mdc} rather that on~\eqref{eq:dk-ineq}.

    Under \Cref{asm:step-assumption}, we can apply directly \citeauthor{aravkin-baraldi-orban-2022}'s convergence properties.

    \begin{proposition}[{\protect \citealp[Theorem~\(3.4\)]{aravkin-baraldi-orban-2022}}]
      \label{prop:delta-succ}
      Let \Cref{asm:step-assumption} be satisfied and
      \[
        \Delta_{\textup{succ}} := \frac{\kappa_{\textup{mdc}}(1 - \eta_2)}{2 \kappa_{\textup{m}} \alpha \beta^2} > 0.
      \]
      If \(x_k\) is not first-order stationary and \(\Delta_k \leq \Delta_{\textup{succ}}\), iteration \(k\) is very successful and \(\Delta_{k+1} > \Delta_k\).
    \end{proposition}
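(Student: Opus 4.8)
The plan is to follow the classical trust-region argument: when $\Delta_k$ is small, the quadratic model $m(\cdot; x_k, D_k)$ predicts the true reduction in $f + h$ accurately enough that $\rho_k \ge \eta_2$. First I would note that \Cref{asm:models} gives $m(0; x_k, D_k) = f(x_k) + h(x_k)$, so the denominator of $\rho_k$ equals exactly $\xi(\Delta_k; x_k, D_k)$. Expanding $1 - \rho_k$ and cancelling $f(x_k) + h(x_k)$, its numerator collapses to $(f(x_k + s_k) + h(x_k + s_k)) - m(s_k; x_k, D_k)$, which \eqref{eq:asm-diff-model-predict} bounds in absolute value by $\kappa_{\textup{m}} \|s_k\|_2^2$. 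Hence $1 - \rho_k \le \kappa_{\textup{m}} \|s_k\|_2^2 / \xi(\Delta_k; x_k, D_k)$, and it suffices to show the right-hand side is at most $1 - \eta_2$.

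Next I would bound the two remaining quantities so that the dependence on $s_{k,1}$ cancels. For the denominator, chaining the model-decrease condition \eqref{eq:asm-mdc} with the Cauchy decrease \eqref{eq:suff-decrease} yields $\xi(\Delta_k; x_k, D_k) \ge \kappa_{\textup{mdc}} \xi_{\textup{cp}}(\Delta_k; x_k, \nu_k) \ge \tfrac{1}{2} \kappa_{\textup{mdc}} \alpha^{-1} \Delta_k^{-1} \|s_{k,1}\|^2$. For the numerator, the decisive observation is that \Cref{step:sk} computes $s_k$ inside a ball of radius $\min(\Delta_k, \beta \|s_{k,1}\|)$, so that $\|s_k\| \le \beta \|s_{k,1}\|$ and therefore $\|s_k\|^2 \le \beta^2 \|s_{k,1}\|^2$. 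Substituting both estimates, the factors $\|s_{k,1}\|^2$ cancel and leave $1 - \rho_k \le 2 \kappa_{\textup{m}} \alpha \beta^2 \Delta_k / \kappa_{\textup{mdc}}$.

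To close, I would invoke non-stationarity: since $x_k$ is not first-order stationary, $\xi_{\textup{cp}}(\Delta_k; x_k, \nu_k) > 0$, whence $s_{k,1} \ne 0$, which legitimizes the cancellation above (the denominators are strictly positive). The hypothesis $\Delta_k \le \Delta_{\textup{succ}}$, with $\Delta_{\textup{succ}} = \kappa_{\textup{mdc}}(1 - \eta_2)/(2 \kappa_{\textup{m}} \alpha \beta^2)$, then gives $1 - \rho_k \le 1 - \eta_2$, i.e. $\rho_k \ge \eta_2$, so iteration $k$ is very successful. The radius update for a very successful iteration sets $\Delta_{k+1} \in [\gamma_3 \Delta_k, \gamma_4 \Delta_k]$ with $\gamma_3 > 1$, so $\Delta_{k+1} \ge \gamma_3 \Delta_k > \Delta_k$.

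I expect the only real point of care to be bookkeeping rather than a genuine obstacle: one must keep every inequality pointing the right way through the chain so the $\|s_{k,1}\|^2$ terms cancel, and one must reconcile the norm $\|\cdot\|_2$ appearing in \eqref{eq:asm-diff-model-predict} with the norm defining $\B$ that governs the radius restriction $\|s_k\| \le \beta \|s_{k,1}\|$. Taking the analysis norm to be $\|\cdot\|_2$ (consistent with the \Cref{asm:step-assumption}), any passage between norms is absorbed into the constant $\kappa_{\textup{m}}$; non-stationarity of $x_k$ is used only to guarantee $s_{k,1} \ne 0$, and everything else is direct substitution.
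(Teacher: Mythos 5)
Your proposal is correct and is essentially the argument behind this result: the paper does not reprove \Cref{prop:delta-succ} but imports it from \citet[Theorem~3.4]{aravkin-baraldi-orban-2022}, and your chain --- $|1-\rho_k| \le \kappa_{\textup{m}}\|s_k\|^2/\xi(\Delta_k;x_k,D_k)$, then bounding the denominator via \eqref{eq:asm-mdc} and \eqref{eq:suff-decrease} and the numerator via $\|s_k\|\le\beta\|s_{k,1}\|$ so that the $\|s_{k,1}\|^2$ factors cancel --- is exactly that proof, including the use of non-stationarity to guarantee $s_{k,1}\neq 0$ and hence strict positivity of the denominator.
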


    Without further assumptions, the following result also holds.

    \begin{proposition}[{\protect \citealp[Theorem~\(3.5\)]{aravkin-baraldi-orban-2022}}]
      Let \Cref{asm:step-assumption} be satisfied and assume \Cref{alg:tr-nonsmooth-diag} generates a finite number of successful iterations.
      Then \(x_k = x_\star\) for all sufficiently large \(k\).
      If~\eqref{eq:nlp} satisfies the constraint qualification at \(x_k\) and~\eqref{eq:def-model-nu} satisfies it at \(s = 0\), \(x_\star\) is first-order critical.
    \end{proposition}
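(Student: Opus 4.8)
The plan is to follow \citeauthor{aravkin-baraldi-orban-2022}'s argument for their Theorem~3.5, the only genuinely new ingredient being the passage from stationarity of the subproblem to first-order stationarity of~\eqref{eq:nlp} in the presence of the box, which is supplied by \Cref{prop:iprox-stationarity}.

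First I would exploit the hypothesis of finitely many successful iterations: there exists an index \(k_0\) such that every iteration \(k \ge k_0\) is unsuccessful, i.e.\ \(\rho_k < \eta_1\). An unsuccessful iteration sets \(x_{k+1} = x_k\), so the sequence is eventually constant, \(x_k = x_\star := x_{k_0}\) for all \(k \ge k_0\), which is the first claim. Moreover, an unsuccessful iteration contracts the radius by at least the factor \(\gamma_2 < 1\), namely \(\Delta_{k+1} \le \gamma_2 \Delta_k\), so that \(\Delta_k \le \gamma_2^{\,k - k_0} \Delta_{k_0} \to 0\) as \(k \to \infty\) (and consequently \(\nu_k \to 0\) as well, since \(\alpha^{-1} \Delta_k^{-1} \to \infty\)).

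Next I would argue by contradiction. Suppose \(x_\star\) is not first-order stationary. By~\eqref{eq:suff-decrease}, \(\xi_{\textup{cp}}(\Delta_k; x_k, \nu_k) \geq \tfrac{1}{2}\nu_k^{-1}\|s_{k,1}\|^2\), so non-stationarity amounts to \(\xi_{\textup{cp}}(\Delta_k; x_\star, \nu_k) > 0\), equivalently \(s_{k,1} \neq 0\), for \(k \ge k_0\). Because \(\Delta_k \to 0\), there is an index \(k_1 \ge k_0\) with \(\Delta_{k_1} \le \Delta_{\textup{succ}}\), where \(\Delta_{\textup{succ}}\) is as in \Cref{prop:delta-succ}. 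That proposition then forces iteration \(k_1\) to be very successful, in particular accepted (\(\rho_{k_1} \ge \eta_2 \ge \eta_1\)), contradicting that no iteration of index \(\ge k_0\) is successful. Hence \(s_{k,1} = 0\) for \(k \ge k_0\).

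Finally, with \(s_{k,1} = 0\) I would invoke \Cref{prop:iprox-stationarity} for the subproblem~\eqref{eq:ipg-iter-sk1} at \(x_\star\). Since \(\Delta_k > 0\), the point \(s = 0\) lies in the interior of the trust region, so the trust-region part of \(C = [\ell, u] \cap (x_\star + \Delta_k \B)\) is inactive and contributes nothing to the normal cone; \(C\) may be replaced by \([\ell, u]\) there. Under the two stated constraint qualifications --- \eqref{eq:cq} for~\eqref{eq:nlp} at \(x_\star\) and for~\eqref{eq:def-model-nu} at \(s = 0\) --- \Cref{prop:iprox-stationarity} yields \(0 \in \nabla f(x_\star) + \partial h(x_\star) + N_{[\ell, u]}(x_\star)\), i.e.\ \(x_\star\) is first-order critical for~\eqref{eq:nlp}.

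I expect the only delicate point to be the equivalence used in the contradiction step: one must ensure that non-stationarity of the fixed point \(x_\star\) keeps \(\xi_{\textup{cp}}(\Delta_k; x_\star, \nu_k)\) positive for the relevant iterations even though both \(\nu_k\) and \(\Delta_k\) tend to zero, so that \Cref{prop:delta-succ} genuinely applies. This consistency of the model-decrease measure is precisely what \citeauthor{aravkin-baraldi-orban-2022}'s framework guarantees, and since \Cref{alg:tr-nonsmooth-diag} is a special case of their Algorithm~3.1, we inherit it directly.
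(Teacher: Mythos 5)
Your argument is correct and is essentially the standard proof underlying the cited result (the paper itself states this proposition by reference to \citet[Theorem~3.5]{aravkin-baraldi-orban-2022} without reproducing the argument): eventual constancy of the iterates and geometric contraction of \(\Delta_k\) after the last successful iteration, a contradiction with \Cref{prop:delta-succ} once \(\Delta_k \le \Delta_{\textup{succ}}\), and finally \Cref{prop:iprox-stationarity} under the two constraint qualifications to pass from \(s_{k,1}=0\) to first-order criticality of~\eqref{eq:nlp}. The only cosmetic looseness is the claimed equivalence between non-stationarity and \(\xi_{\textup{cp}}>0\), which needs the constraint qualifications in one direction, but since those are hypotheses of the second claim this does not affect the proof.
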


    We continue to follow the analysis of \citep{aravkin-baraldi-orban-2022} and note that \(\Delta_k \geq \Delta_{\min}\) for all \(k \in \N\), where \(\Delta_{\min} := \min(\Delta_0, \, \gamma_1 \Delta_{\textup{succ}})\).
    \citet{aravkin-baraldi-orban-2022} establish that \(\xi_{\textup{cp}}(\cdot ; x_k, \nu_k)\) is increasing, and therefore, \(\xi_{\textup{cp}}(\Delta_k; x_k, \nu_k) \geq \xi_{\textup{cp}}(\Delta_{\min}; x_k, \nu_k)\).
    In addition, they show that \(\nu_k^{-1/2} \xi_{\textup{cp}}(\Delta_k; x_k, \nu_k)^{1/2}\) is an appropriate criticality measure.
    Let \(0 < \epsilon < 1\), and
    \begin{align*}
      I(\epsilon) & := \{ k \in \N \mid \nu_k^{-1/2} \xi_{\textup{cp}}(\Delta_k; x_k, \nu_k)^{1/2} > \epsilon \}, \\ 
      S(\epsilon) & := \{ k \in I(\epsilon) \mid \rho_k \geq \eta_1 \}, \\
      U(\epsilon) & :=  \{ k \in I(\epsilon) \mid \rho_k < \eta_1 \},
    \end{align*}
    be the set of iterations, successful iterations, and unsuccessful iterations until the criticality measure drops below \(\epsilon\), respectively.

    We now derive bounds on \(|I(\epsilon)|\) using the analysis of \citep{aravkin-baraldi-orban-2022} specialized to \Cref{alg:tr-nonsmooth-diag} under the assumption that \(\{D_k\}\) is bounded.
    We make the following assumption.
    \begin{assumption}%
      \label{asm:D-bounded}
      There exists \(d_{\max} > 0\) such that \(\|d_k\|_{\infty} \leq d_{\max}\) for all \(k \in \N\).
    \end{assumption}

    In \Cref{alg:tr-nonsmooth-diag}, it is not difficult to ensure satisfaction of \Cref{asm:D-bounded}.
    For instance, one may prescribe a value \(d_{\max} > 0\) and reset each \(d_k\) componentwise to \(\min(\max(d_k, -d_{\max}), d_{\max})\).
    We point out that \Cref{asm:D-bounded} holds if~\eqref{eq:dk-ineq} in \Cref{prop:mdc} is satisfied for all \(k\), because, in this case, \(\|d_k\|_{\infty} \le d_{\max} := (\kappa_{\textup{mdc}}^{-1} - 1) \alpha^{-1} \Delta_{\min}^{-1}\).

    Under \Cref{asm:D-bounded}, the choice of \(\nu_k\) in \Cref{alg:tr-nonsmooth-diag} guarantees that
    \[
      \nu_k \geq \nu_{\min} := 1 / (d_{\max} + \alpha^{-1} \Delta_{\min}^{-1}) > 0.
    \]

    The following result establishes that \(|S(\epsilon)|\) is \(O(\epsilon^{-2})\).

    \begin{proposition}[{\protect \citealp[Lemma~\(3.6\)]{aravkin-baraldi-orban-2022}}]%
      \label{prop:num-successful}
      Let \Cref{asm:step-assumption} and \Cref{asm:D-bounded} be satisfied.
      Assume that \Cref{alg:tr-nonsmooth-diag} generates infinitely many successful iterations and that there exists \((f + h)_{\textup{low}} \in \R\) such that \((f + h)(x_k) \geq (f + h)_{\textup{low}}\) for all \(k \in \N\).
      Let \(\epsilon \in (0, \, 1)\).
      Then,
      \[
        |S(\epsilon)| \leq \frac{(f + h)(x_0) - (f + h)_{\textup{low}}}{\eta_1 \kappa_{\textup{mdc}} \nu_{\min} \epsilon^2}.
      \]
    \end{proposition}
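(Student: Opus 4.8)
The plan is to run the classical telescoping-of-decreases argument for trust-region methods, adapted to the nonsmooth setting. The starting observation is that on a successful iteration, \(\rho_k \geq \eta_1\) together with \(m(0; x_k, D_k) = f(x_k) + h(x_k)\) (which follows from \Cref{asm:models}, since \(\varphi_{\textup{cp}}(0; x_k) = f(x_k)\) and \(\psi(0; x_k) = h(x_k)\)) yields the actual reduction bound
\[
(f + h)(x_k) - (f + h)(x_{k+1}) \geq \eta_1\, \xi(\Delta_k; x_k, D_k),
\]
whereas unsuccessful iterations leave \(x_{k+1} = x_k\) and hence produce no change. Consequently, the sequence \(\{(f + h)(x_k)\}\) is non-increasing.

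First I would lower-bound the right-hand side for indices in \(S(\epsilon)\). By the model-decrease condition~\eqref{eq:asm-mdc} of \Cref{asm:step-assumption}, \(\xi(\Delta_k; x_k, D_k) \geq \kappa_{\textup{mdc}}\, \xi_{\textup{cp}}(\Delta_k; x_k, \nu_k)\). Next, for \(k \in I(\epsilon)\) the definition of the criticality measure gives \(\nu_k^{-1} \xi_{\textup{cp}}(\Delta_k; x_k, \nu_k) > \epsilon^2\), i.e. \(\xi_{\textup{cp}}(\Delta_k; x_k, \nu_k) > \nu_k\, \epsilon^2\). Combining these with the uniform bound \(\nu_k \geq \nu_{\min}\) established above under \Cref{asm:D-bounded} produces, for every \(k \in S(\epsilon) \subseteq I(\epsilon)\),
\[
(f + h)(x_k) - (f + h)(x_{k+1}) \geq \eta_1 \kappa_{\textup{mdc}}\, \xi_{\textup{cp}}(\Delta_k; x_k, \nu_k) > \eta_1 \kappa_{\textup{mdc}} \nu_{\min}\, \epsilon^2.
\]

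Then I would sum over \(k \in S(\epsilon)\). Because every per-iteration decrease is nonnegative and the objective is bounded below by \((f + h)_{\textup{low}}\), the telescoping sum of decreases over all successful iterations is at most \((f + h)(x_0) - (f + h)_{\textup{low}}\), and restricting to the subset \(S(\epsilon)\) only shrinks the sum. Therefore \(|S(\epsilon)| \cdot \eta_1 \kappa_{\textup{mdc}} \nu_{\min}\, \epsilon^2 \leq (f + h)(x_0) - (f + h)_{\textup{low}}\), which rearranges to the stated bound.

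The argument is essentially routine given the earlier results; the only points requiring care are the identity \(m(0; x_k, D_k) = f(x_k) + h(x_k)\) and, more importantly, the uniform lower bound \(\nu_k \geq \nu_{\min}\), which is the crux: it is what converts the per-successful-iteration decrease into a quantity independent of \(k\), and it rests on \Cref{asm:D-bounded} (controlling \(\|d_k\|_{\infty}\)) together with the already-noted uniform lower bound \(\Delta_k \geq \Delta_{\min}\) on the trust-region radius. Without either of these, \(\nu_k\) could degenerate and the counting bound would fail.
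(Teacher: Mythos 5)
Your proof is correct and follows essentially the same telescoping argument the paper relies on: the paper defers this proposition to the cited reference, but its own proof of the analogous \Cref{lem:num-successful-itrdh} uses exactly your chain (actual decrease \(\geq \eta_1\,\xi \geq \eta_1\kappa_{\textup{mdc}}\,\xi_{\textup{cp}} > \eta_1\kappa_{\textup{mdc}}\nu_{\min}\epsilon^2\), then sum over \(S(\epsilon)\)). You also correctly identify that \(\nu_k \geq \nu_{\min}\) is the key uniform bound, resting on \Cref{asm:D-bounded} and \(\Delta_k \geq \Delta_{\min}\).
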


    The next result establishes that \(|U(\epsilon)|\) is also \(O(\epsilon^{-2})\).

    \begin{proposition}[{\protect \citealp[Lemma~\(3.7\)]{aravkin-baraldi-orban-2022}}]%
      \label{prop:num-unsuccessful}
      Under the assumptions of \Cref{prop:num-successful},
      \[
        |U(\epsilon)| \leq \log_{\gamma_2} (\Delta_{\min} / \Delta_0) + |S(\epsilon)| \, |\log_{\gamma_2}(\gamma_4)|.
      \]
    \end{proposition}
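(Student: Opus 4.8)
The plan is to run the standard trust-region telescoping argument on the sequence of radii $\{\Delta_k\}$. First I would record the one-step behaviour of $\Delta_k$ dictated by the update rule in \Cref{alg:tr-nonsmooth-diag}: on a very successful iteration $\Delta_{k+1} \le \gamma_4 \Delta_k$; on a merely successful iteration $\Delta_{k+1} \le \Delta_k \le \gamma_4 \Delta_k$, because $\gamma_4 > 1$; and on an unsuccessful iteration $\Delta_{k+1} \le \gamma_2 \Delta_k$ with $\gamma_2 \in (0,1)$. Hence every $k \in S(\epsilon)$ contributes a factor at most $\gamma_4$ and every $k \in U(\epsilon)$ a factor at most $\gamma_2 < 1$ to the successive ratios $\Delta_{k+1}/\Delta_k$. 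I would also recall the lower bound $\Delta_k \ge \Delta_{\min}$ established just above, which is the ingredient that prevents unsuccessful iterations from accumulating without limit.

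Interpreting $I(\epsilon)$ as the iterations performed before the criticality measure first drops to or below $\epsilon$, so that it is the contiguous block $\{0, \dots, N-1\}$ and $S(\epsilon)$, $U(\epsilon)$ partition it, I would telescope $\Delta_N = \Delta_0 \prod_{k=0}^{N-1} (\Delta_{k+1}/\Delta_k)$. All ratios are positive, so the per-step bounds give $\Delta_N \le \Delta_0 \gamma_4^{|S(\epsilon)|} \gamma_2^{|U(\epsilon)|}$. Combining with $\Delta_N \ge \Delta_{\min}$ yields $\Delta_{\min} \le \Delta_0 \gamma_4^{|S(\epsilon)|} \gamma_2^{|U(\epsilon)|}$.

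Taking natural logarithms gives $\ln(\Delta_{\min}/\Delta_0) \le |S(\epsilon)| \ln \gamma_4 + |U(\epsilon)| \ln \gamma_2$. Because $\gamma_2 \in (0,1)$ we have $\ln \gamma_2 < 0$, so dividing by $\ln \gamma_2$ reverses the inequality and isolates $|U(\epsilon)| \le \ln(\Delta_{\min}/\Delta_0)/\ln \gamma_2 - |S(\epsilon)| \ln \gamma_4 / \ln \gamma_2$. The first term is $\log_{\gamma_2}(\Delta_{\min}/\Delta_0)$ by definition, and since $\gamma_4 > 1$ forces $\ln \gamma_4 > 0$ against $\ln \gamma_2 < 0$, the quotient $-\ln \gamma_4 / \ln \gamma_2$ is positive and equal to $|\log_{\gamma_2}(\gamma_4)|$, giving exactly the claimed bound.

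The delicate point, and the one I would be most careful about, is the index bookkeeping: I must ensure $I(\epsilon)$ is the contiguous block of iterations preceding the first drop of the criticality measure below $\epsilon$, so that $S(\epsilon)$ and $U(\epsilon)$ partition precisely the ratios entering the telescoped product $\Delta_0 \to \Delta_N$; otherwise successful iterations occurring after the measure has already dropped would inflate the count and break the bound. The lower bound $\Delta_k \ge \Delta_{\min}$ is what converts the telescoped inequality into a finite estimate, and the only remaining care is in tracking signs when passing from natural to base-$\gamma_2$ logarithms, both terms inheriting their sign from $\ln \gamma_2 < 0$.
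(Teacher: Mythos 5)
Your argument is correct and is essentially the standard one: the paper itself states this proposition without proof, citing Lemma~3.7 of \citet{aravkin-baraldi-orban-2022}, but your telescoping of $\Delta_{k+1}/\Delta_k$ over $S(\epsilon)$ and $U(\epsilon)$ combined with the lower bound $\Delta_k \ge \Delta_{\min}$ is precisely the mechanism used there, and it is mirrored almost verbatim in the paper's own proof of \Cref{lem:num-unsuccessful-itrdh} for the iTRDH variant. Your care about $I(\epsilon)$ being the contiguous block of iterations before the criticality measure first drops below $\epsilon$, and about the sign flip when dividing by $\ln\gamma_2 < 0$, addresses the only two places where this argument could go wrong.
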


    The direct consequence of \Cref{prop:num-successful,prop:num-unsuccessful} is that \(|I(\epsilon)|\) is also \(O(\epsilon^{-2})\).
    Because \(\epsilon \in (0, \, 1)\) is arbitrary, we conclude that \(\liminf \nu_k^{-1/2} \xi_{\textup{cp}}(\Delta_k; x_k, \nu_k)^{1/2} = 0\).

    \subsection{A variant saving proximal operator computations}
    \label{sec:itrdh}

    We present a variant of \Cref{alg:tr-nonsmooth-diag} in which we set
    \begin{equation}
      \label{eq:vk-itrdh}
      \nu_k := 1 / (\|d_k\|_{\infty} + \alpha^{-1})
    \end{equation}
    in \cref{step:vk}, we do not compute \(s_{k,1}\) in \cref{step:sk1}, and we leave \(\Delta_k\) unchanged in~\cref{step:sk}, thus saving a proximal operator computation at each iteration.
    Under \Cref{asm:D-bounded}, this choice of \(\nu_k\) leads to a new \(\nu_{\min}\)
    \begin{equation}
      \label{eq:vk-min-itrdh}
      \nu_k \geq \nu_{\min} := 1 / (d_{\max} + \alpha^{-1}) > 0.
    \end{equation}
    
    We use the criticality measure
    \begin{equation}
      \nu_k^{-1/2} \xi(\Delta_k; x_k, D_k)^{1/2},
    \end{equation}
    and modify the definition of \(I(\epsilon)\), \(S(\epsilon)\) and \(U(\epsilon)\) accordingly.
    To establish the same convergence properties as those of \Cref{alg:tr-nonsmooth-diag}, we need \Cref{asm:step-assumption} to hold for our algorithmic variant.
    The following assumption summarizes our requirements.
    \begin{stepassumption}%
      \label{asm:step-assumption-itrdh}
      There exists \(\kappa_{\textup{m}} > 0\) and \(\kappa_{\textup{mdc}} \in (0, 1)\) such that for all \(k\),
      \begin{subequations}
        \begin{align}
          |f(x_k + s_k) + h(x_k + s_k) - (\varphi(s_k; x_k, D_k) + \psi(s_k; x_k))| &\leq \kappa_{\textup{m}} \|s_k\|_2^2, \label{eq:asm-diff-model-predict-itrdh}\\
          \xi(\Delta_k; x_k, D_k) &\ge \kappa_{\textup{mdc}} \xi_{\textup{cp}}(\Delta_k; x_k; \nu_k) \label{eq:asm-mdc-itrdh},
        \end{align}
      \end{subequations}
      where \(\nu_k = 1 / (\|d_k\|_{\infty} + \alpha^{-1})\) in \cref{step:vk} and \(s_k\) is computed without changing \(\Delta_k\) in \cref{step:sk} of \Cref{alg:tr-nonsmooth-diag}.
    \end{stepassumption}
    Even though \Cref{asm:step-assumption-itrdh} involves \(\xi_{\textup{cp}}(\Delta_k; x_k, \nu_k)\), we do not need to compute it in practice.
    Conveniently, if \(\{D_k\}\) is bounded, \(\nabla f\) is Lipschitz continuous, and we select \(\psi(s; x_k) = h(x_k + s)\), \eqref{eq:asm-diff-model-predict-itrdh} is still satisfied.
    The following proposition, analogous to \Cref{prop:mdc}, shows that~\eqref{eq:dk-ineq-itrdh} is a sufficient condition for~\eqref{eq:asm-mdc-itrdh} to hold.

    \begin{proposition}
      \label{prop:mdc-itrdh}
      If
      \begin{equation}
        \label{eq:dk-ineq-itrdh}
        \|d_k\|_{\infty} \le (\kappa_{\textup{mdc}}^{-1} - 1) \alpha^{-1},
      \end{equation}
      then~\eqref{eq:asm-mdc-itrdh} is satisfied for \Cref{alg:tr-nonsmooth-diag} with \(\nu_k = 1 / (\|d_k\|_{\infty} + \alpha^{-1})\) in \cref{step:vk}, \(s_{k,1}\) is not computed in \cref{step:sk1}, and \(\Delta_k\) is unchanged in~\cref{step:sk}.
    \end{proposition}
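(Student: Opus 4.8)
The plan is to mirror the proof of \Cref{prop:mdc} almost verbatim, replacing the factor $\alpha^{-1} \Delta_k^{-1}$ that appears there by $\alpha^{-1}$, since the only structural changes in the variant are the new definition $\nu_k = 1/(\|d_k\|_{\infty} + \alpha^{-1})$ and the fact that $\Delta_k$ is left unchanged when computing $s_k$ in \cref{step:sk}. The observation that makes the transcription legitimate is that, because the radius is no longer shrunk, the step $s_{k,1}$---still understood as a minimizer of $m(\cdot; x_k, \nu_k)$ over $[\ell, u] \cap (x_k + \Delta_k \B)$, even though it is never formed in the variant---remains feasible for the subproblem defining $s_k$. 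Hence $m(s_k; x_k, D_k) \le m(s_{k,1}; x_k, D_k)$ continues to hold, now without any appeal to $\beta \ge 1$.

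First I would start from this inequality, bound the quadratic term by $\tfrac{1}{2} \|d_k\|_{\infty} \|s_{k,1}\|^2$, and substitute $\|d_k\|_{\infty} = \nu_k^{-1} - \alpha^{-1}$, the defining relation~\eqref{eq:vk-itrdh} for the variant's $\nu_k$. This yields
\[
  \xi(\Delta_k; x_k, D_k) \ge \xi_{\textup{cp}}(\Delta_k; x_k, \nu_k) - \tfrac{1}{2}(\nu_k^{-1} - \alpha^{-1}) \|s_{k,1}\|^2,
\]
so that establishing~\eqref{eq:asm-mdc-itrdh} reduces to showing
\[
  (1 - \kappa_{\textup{mdc}}) \xi_{\textup{cp}}(\Delta_k; x_k, \nu_k) \ge \tfrac{1}{2}(\nu_k^{-1} - \alpha^{-1}) \|s_{k,1}\|^2,
\]
which is the exact analogue of~\eqref{eq:ineq-xi-mdc} with $\alpha^{-1} \Delta_k^{-1}$ erased in favour of $\alpha^{-1}$.

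To close, I would note that the hypothesis~\eqref{eq:dk-ineq-itrdh} is equivalent to $\nu_k^{-1} = \|d_k\|_{\infty} + \alpha^{-1} \le \kappa_{\textup{mdc}}^{-1} \alpha^{-1}$, i.e.\ $\nu_k \ge \kappa_{\textup{mdc}} \alpha$, and a one-line rearrangement then gives $(1 - \kappa_{\textup{mdc}}) \tfrac{1}{2} \nu_k^{-1} \ge \tfrac{1}{2}(\nu_k^{-1} - \alpha^{-1})$. Multiplying by $\|s_{k,1}\|^2$ and invoking the sufficient-decrease bound $\xi_{\textup{cp}}(\Delta_k; x_k, \nu_k) \ge \tfrac{1}{2} \nu_k^{-1} \|s_{k,1}\|^2$---which, as derived in~\eqref{eq:suff-decrease}, follows purely from $s_{k,1}$ minimizing $m(\cdot; x_k, \nu_k)$ with $0$ feasible, and is insensitive to the particular formula chosen for $\nu_k$---delivers the required inequality. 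I do not expect a genuinely hard step here: the computation is a transcription of \Cref{prop:mdc}. The only point deserving care is the bookkeeping around $s_{k,1}$, namely verifying that both the feasibility used for $m(s_k; x_k, D_k) \le m(s_{k,1}; x_k, D_k)$ and the sufficient-decrease bound survive even though $s_{k,1}$ is never computed in the variant; both rest solely on the unchanged radius and on $s_{k,1}$ being a minimizer of the $\nu_k$-model, so neither presents an obstacle.
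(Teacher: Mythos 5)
Your proposal is correct and matches the paper's approach exactly: the paper's own proof is the one-line remark that the argument is identical to that of \Cref{prop:mdc} once \(\nu_k\) is taken as in~\eqref{eq:vk-itrdh}, which is precisely the transcription you carry out. Your extra care about why the feasibility of \(s_{k,1}\) for the \(s_k\)-subproblem and the bound \(\xi_{\textup{cp}}(\Delta_k; x_k, \nu_k) \ge \tfrac{1}{2}\nu_k^{-1}\|s_{k,1}\|^2\) survive even though \(s_{k,1}\) is never computed is a welcome clarification, but not a departure from the paper's reasoning.
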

    \begin{proof}
      The proof is identical to that of \Cref{prop:mdc} if we use~\eqref{eq:vk-itrdh}.
    \end{proof}

    Whenever~\eqref{eq:asm-mdc-itrdh} holds, we have
    \begin{equation}
      \label{eq:itrdh-crit}
      \nu_k^{-1/2} \xi(\Delta_k; x_k, \nu_k)^{1/2} \ge \kappa_{\textup{mdc}}^{1/2} \nu_k^{-1/2} \xi_{\textup{cp}}(\Delta_k; x_k, \nu_k)^{1/2} \ge \kappa_{\textup{mdc}}^{1/2} \nu_k^{-1} \|s_{k,1}\|,
    \end{equation}
    which ensures that \(\nu_k^{-1/2} \xi(\Delta_k; x_k, \nu_k)^{1/2} \rightarrow 0\) implies \(\|s_{k,1}\| \rightarrow 0\), and that \(\nu_k^{-1/2} \xi(\Delta_k; x_k, \nu_k)^{1/2}\) is an appropriate criticality measure.
    We emphasize that there is no need to compute \(s_{k,1}\) and \(\xi_{\textup{cp}}(\Delta_k; x_k, \nu_k)\);~\eqref{eq:itrdh-crit} is only used as a theoretical justification for the choice of our criticality measure.

    We now establish convergence properties similar to those of \Cref{sec:trdh}, and follow the analysis of \citet[Chapter~\(2.3\)]{cartis-gould-toint-2022}.
    \begin{proposition}
      \label{prop:delta-up-bnd-itrdh}
      Let \Cref{asm:D-bounded} and \Cref{asm:step-assumption-itrdh} be satisfied.
      Let $\nu_{\min}$ be as defined in~\eqref{eq:vk-min-itrdh}.
      If
      \begin{equation}
        \label{eq:delta-upper-bnd-itrdh}
        \Delta_k \le \sqrt{\frac{\nu_{\min}(1 - \eta_2)}{\kappa_{\textup{m}}}} \nu_k^{-1/2} \xi(\Delta_k; x_k, D_k)^{1/2}
      \end{equation}
      and \(x_k\) is not first-order stationary, iteration \(k\) is very successful and \(\Delta_{k+1} > \Delta_k\).
    \end{proposition}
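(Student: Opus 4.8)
The plan is to follow the classical trust-region argument that a sufficiently small radius forces a very successful step. First I would observe that, by \Cref{asm:models}, $m(0; x_k, D_k) = \varphi_{\textup{cp}}(0; x_k) + \psi(0; x_k) = f(x_k) + h(x_k)$, so the denominator of $\rho_k$ is exactly $\xi(\Delta_k; x_k, D_k)$. Non-stationarity of $x_k$, through \eqref{eq:itrdh-crit} and the correspondence with $s_{k,1}$, guarantees $\xi(\Delta_k; x_k, D_k) > 0$; alternatively, since $\Delta_k > 0$, the hypothesis~\eqref{eq:delta-upper-bnd-itrdh} already forces its right-hand side, hence $\xi(\Delta_k; x_k, D_k)$, to be positive. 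Either way $\rho_k$ is well defined.

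Next I would write
\[
  1 - \rho_k = \frac{f(x_k + s_k) + h(x_k + s_k) - m(s_k; x_k, D_k)}{\xi(\Delta_k; x_k, D_k)} = \frac{f(x_k + s_k) + h(x_k + s_k) - (\varphi(s_k; x_k, D_k) + \psi(s_k; x_k))}{\xi(\Delta_k; x_k, D_k)},
\]
and bound the numerator by $\kappa_{\textup{m}} \|s_k\|_2^2$ using~\eqref{eq:asm-diff-model-predict-itrdh}. Here lies the one structural point specific to the variant: because $\Delta_k$ is left \emph{unchanged} in \cref{step:sk}, the trust-region (and bound) constraint gives $\|s_k\| \le \Delta_k$ directly, without the factor $\beta$ that appears in \Cref{prop:delta-succ}. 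Thus $1 - \rho_k \le \kappa_{\textup{m}} \Delta_k^2 / \xi(\Delta_k; x_k, D_k)$.

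Squaring~\eqref{eq:delta-upper-bnd-itrdh} yields $\Delta_k^2 \le (\nu_{\min}(1 - \eta_2)/\kappa_{\textup{m}})\, \nu_k^{-1} \xi(\Delta_k; x_k, D_k)$, which I would substitute to obtain $1 - \rho_k \le \nu_{\min} \nu_k^{-1} (1 - \eta_2)$. Finally, $\nu_k \ge \nu_{\min}$ from~\eqref{eq:vk-min-itrdh} gives $\nu_{\min}/\nu_k \le 1$, so $1 - \rho_k \le 1 - \eta_2$, i.e. $\rho_k \ge \eta_2$. The iteration is therefore very successful, and the radius-update rule of \Cref{alg:tr-nonsmooth-diag} places $\Delta_{k+1} \in [\gamma_3 \Delta_k, \gamma_4 \Delta_k]$ with $\gamma_3 > 1$, whence $\Delta_{k+1} \ge \gamma_3 \Delta_k > \Delta_k$. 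I expect no serious obstacle: the only delicate points are securing positivity of $\xi(\Delta_k; x_k, D_k)$ so the ratio is meaningful, and faithfully exploiting that the variant never shrinks $\Delta_k$ so that $\|s_k\| \le \Delta_k$ holds verbatim; any discrepancy between the trust-region norm and the $\ell_2$-norm used in~\eqref{eq:asm-diff-model-predict-itrdh} is absorbed into $\kappa_{\textup{m}}$ by norm equivalence.
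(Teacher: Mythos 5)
Your proposal is correct and follows essentially the same route as the paper: bound $|\rho_k - 1|$ by $\kappa_{\textup{m}}\|s_k\|^2/\xi(\Delta_k; x_k, D_k) \le \kappa_{\textup{m}}\Delta_k^2/\xi(\Delta_k; x_k, D_k)$ via~\eqref{eq:asm-diff-model-predict-itrdh} and $\|s_k\| \le \Delta_k$, then insert $\nu_k \ge \nu_{\min}$ and the hypothesis~\eqref{eq:delta-upper-bnd-itrdh} to conclude $\rho_k \ge \eta_2$. Your added remarks on the positivity of $\xi(\Delta_k; x_k, D_k)$, the absence of the factor $\beta$, and the explicit use of $\gamma_3 > 1$ to get the strict inequality $\Delta_{k+1} > \Delta_k$ are details the paper leaves implicit, not a different argument.
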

    \begin{proof}
      Using \Cref{asm:step-assumption-itrdh} and \(\|s_k\| \le \Delta_k\), we have
      \begin{equation*}
        \begin{aligned}
          |\rho_k - 1| &= \left|\frac{f(x_k + s_k) + h(x_k + s_k) - m(s_k; x_k, D_k)}{m(0; x_k, D_k) - m(s_k; x_k, D_k)}\right| \\
          &\le \frac{\kappa_{\textup{m}} \|s_k\|^2}{\xi(\Delta_k; x_k, D_k)} \\
          &\le \frac{\kappa_{\textup{m}} \Delta_k^2}{\xi(\Delta_k; x_k, D_k)}.
        \end{aligned}
      \end{equation*}
      \Cref{asm:D-bounded} and~\eqref{eq:vk-min-itrdh} lead to
      \begin{equation*}
        |\rho_k - 1| \le \frac{\kappa_{\textup{m}} \nu_{\min}^{-1} \Delta_k^2}{\nu_k^{-1} \xi(\Delta_k; x_k, D_k)}.
      \end{equation*}
      Whenever~\eqref{eq:delta-upper-bnd-itrdh} holds, we have \(\rho_k \ge \eta_2\), implying that iteration \(k\) is very successful and \(\Delta_{k+1} \ge \Delta_k\).
    \end{proof}

    The following lemma is inspired by \citet[Theorem~\(6.4.3\)]{conn-gould-toint-2000} and \citet[Lemma~\(2.3.4\)]{cartis-gould-toint-2022}.
    \begin{lemma}
      \label{lem:delta-min-itrdh}
      Let \Cref{asm:step-assumption-itrdh} be satisfied.
      Then, for all \(k \ge 0\),
      \begin{equation}
        \label{eq:delta-min-itrdh}
        \Delta_k > \gamma_1 \sqrt{\frac{\nu_{\min}(1 - \eta_2)}{\kappa_{\textup{m}}}} \min \left(1, \Delta_0 \frac{\sqrt{\kappa_{\textup{m}} \nu_{\min}^{-1}}}{\nu_0^{-1/2} \xi(\Delta_0; x_0, D_0)^{1/2}} \right) \min_{i \in [0, k]} \nu_i^{-1/2} \xi(\Delta_i; x_i, D_i)^{1/2}.
      \end{equation}
    \end{lemma}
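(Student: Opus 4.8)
The plan is to prove the lower bound on $\Delta_k$ by induction on $k$, following the standard trust-region argument but adapted to the composite criticality measure $\nu_k^{-1/2} \xi(\Delta_k; x_k, D_k)^{1/2}$. The key structural fact is \Cref{prop:delta-up-bnd-itrdh}: whenever the radius is smaller than the threshold
\[
  \Delta_k \le \sqrt{\tfrac{\nu_{\min}(1 - \eta_2)}{\kappa_{\textup{m}}}}\, \nu_k^{-1/2} \xi(\Delta_k; x_k, D_k)^{1/2},
\]
the iteration is very successful and the radius does not shrink. The difficulty in turning this into a uniform lower bound is that the right-hand side depends on the \emph{current} criticality value, which itself varies with $k$; so I would track the running minimum $\min_{i \in [0,k]} \nu_i^{-1/2} \xi(\Delta_i; x_i, D_i)^{1/2}$, which is nonincreasing and therefore gives a clean quantity to induct against.

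First I would set $c := \sqrt{\nu_{\min}(1 - \eta_2)/\kappa_{\textup{m}}}$ and define $\mu_k := \min_{i \in [0,k]} \nu_i^{-1/2} \xi(\Delta_i; x_i, D_i)^{1/2}$ for brevity (these are not new macros, just shorthand in the prose). I would then establish the induction invariant that $\Delta_k \ge \gamma_1 c\, \mu_k$ up to the initial correction factor. The base case $k = 0$ is handled by the $\min(1, \cdot)$ term in~\eqref{eq:delta-min-itrdh}: the factor $\Delta_0 \sqrt{\kappa_{\textup{m}}\nu_{\min}^{-1}} / (\nu_0^{-1/2}\xi(\Delta_0; x_0, D_0)^{1/2})$ is exactly $\Delta_0 / (c\,\mu_0)$, so the right-hand side at $k=0$ collapses to a quantity bounded above by $\Delta_0$, and the strict inequality $\Delta_0 > $ (that expression) holds because of the $\gamma_1 < 1$ factor.

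For the inductive step I would argue by cases on iteration $k$. If $\Delta_k > c\,\mu_k$, then since the radius can decrease by at most a factor $\gamma_1$ on an unsuccessful step, $\Delta_{k+1} \ge \gamma_1 \Delta_k > \gamma_1 c\, \mu_k \ge \gamma_1 c\, \mu_{k+1}$, using monotonicity $\mu_{k+1} \le \mu_k$. Otherwise $\Delta_k \le c\, \mu_k \le c\, \nu_k^{-1/2}\xi(\Delta_k; x_k, D_k)^{1/2}$, which is precisely the hypothesis~\eqref{eq:delta-upper-bnd-itrdh} of \Cref{prop:delta-up-bnd-itrdh} (with $\mu_k$ replaced by the current value it bounds); hence iteration $k$ is very successful and $\Delta_{k+1} \ge \Delta_k$. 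Combined with the inductive hypothesis $\Delta_k \ge \gamma_1 c\,\mu_k \cdot (\text{init factor})$ and $\gamma_1 < 1$, this yields $\Delta_{k+1} \ge \gamma_1 c\,\mu_{k+1}\cdot(\text{init factor})$.

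The main obstacle I anticipate is bookkeeping the threshold comparison cleanly: \Cref{prop:delta-up-bnd-itrdh} is phrased with the current criticality $\nu_k^{-1/2}\xi(\Delta_k;x_k,D_k)^{1/2}$ on its right-hand side, whereas the target bound uses the running minimum $\mu_k$. Since $\mu_k \le \nu_k^{-1/2}\xi(\Delta_k;x_k,D_k)^{1/2}$, any radius falling below $c\,\mu_k$ automatically falls below $c$ times the current criticality, so the proposition applies; I would make sure this monotone substitution is stated explicitly so the case split is airtight. A minor subtlety is the strictness of the inequality in~\eqref{eq:delta-min-itrdh}: the strict ``$>$'' should propagate from the base case (where $\gamma_1 < 1$ forces strict inequality) through both branches, since $\gamma_1 \Delta_k$ and $\Delta_k \ge \gamma_1 c\,\mu_k(\cdots)$ each preserve it.
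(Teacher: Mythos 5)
Your proposal is correct and follows essentially the same argument as the paper: both rest on the threshold of \Cref{prop:delta-up-bnd-itrdh}, the factor-\(\gamma_1\) lower bound on radius decrease, and the monotonicity of the running minimum, with the paper merely phrasing the induction as a minimal-counterexample contradiction (take the first \(k\) violating~\eqref{eq:delta-min-itrdh} and show \(k-1\) must also violate it). The only slip is cosmetic: the initial correction factor equals \(\Delta_0/\bigl(\sqrt{\nu_{\min}/\kappa_{\textup{m}}}\,\mu_0\bigr)\), not \(\Delta_0/(c\,\mu_0)\) with your \(c\) containing \(\sqrt{1-\eta_2}\), but your base-case conclusion \(\Delta_0 > \gamma_1\sqrt{1-\eta_2}\,\Delta_0\) is unaffected.
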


    \begin{proof}
      We proceed as in \citet[Lemma~\(2.3.4\)]{cartis-gould-toint-2022}.
      The bound certainly holds for \(k = 0\), because
      \begin{equation*}
        \Delta_0 > \gamma_1 \sqrt{1 - \eta_2} \Delta_0.
      \end{equation*} 
      Now, we proceed by contradiction and assume that \(k \ge 1\) is the first iteration such that~\eqref{eq:delta-min-itrdh} is not verified.
      \begin{equation*}
        \begin{aligned}
          \Delta_k &\le \gamma_1 \sqrt{\frac{\nu_{\min}(1 - \eta_2)}{\kappa_{\textup{m}}}} \min \left(1, \Delta_0 \frac{\sqrt{\kappa_{\textup{m}} \nu_{\min}^{-1}}}{\nu_0^{-1/2}\xi(\Delta_0; x_0, D_0)^{1/2}} \right) \min_{i \in [0, k]} \nu_i^{-1/2} \xi(\Delta_i; x_i, D_i)^{1/2} \\
          &\le \gamma_1 \sqrt{\frac{\nu_{\min}(1 - \eta_2)}{\kappa_{\textup{m}}}} \nu_{k-1}^{-1/2} \xi(\Delta_{k-1}; x_{k-1}, D_{k-1})^{1/2}.
        \end{aligned}
      \end{equation*}
      We have \(\gamma_1 \Delta_{k-1} \le \Delta_k\) because of the updating rules of \(\Delta_k\) in \Cref{alg:tr-nonsmooth-diag}, which implies that 
      \begin{equation*}
        \Delta_{k-1} \le \sqrt{\frac{\nu_{\min}(1 - \eta_2)}{\kappa_{\textup{m}}}} \nu_{k-1}^{-1/2} \xi(\Delta_{k-1}; x_{k-1}, D_{k-1})^{1/2}.
      \end{equation*}
      Using \Cref{prop:delta-up-bnd-itrdh}, this results in iteration \(k-1\) being very successful and \(\Delta_k > \Delta_{k-1}\).
      Therefore,
      \begin{equation*}
        \begin{aligned}
          \Delta_{k-1} < \Delta_k &\le \gamma_1 \sqrt{\frac{\nu_{\min}(1 - \eta_2)}{\kappa_{\textup{m}}}} \min \left(1, \Delta_0 \frac{\sqrt{\kappa_{\textup{m}} \nu_{\min}^{-1}}}{\nu_0^{-1/2} \xi(\Delta_0; x_0, D_0)^{1/2}} \right) \min_{i \in [0, k]} \nu_i^{-1/2} \xi(\Delta_i; x_i, D_i)^{1/2} \\
          &\le \gamma_1 \sqrt{\frac{\nu_{\min}(1 - \eta_2)}{\kappa_{\textup{m}}}} \min \left(1, \Delta_0 \frac{\sqrt{\kappa_{\textup{m}} \nu_{\min}^{-1}}}{\nu_0^{-1/2} \xi(\Delta_0; x_0, D_0)^{1/2}} \right) \min_{i \in [0, k-1]} \nu_i^{-1/2} \xi(\Delta_i; x_i, D_i)^{1/2},
        \end{aligned}
      \end{equation*}
      which is a contradiction with iteration \(k\) being the first to violate~\eqref{eq:delta-min-itrdh}.
    \end{proof}

    Now, we show analogous properties to \Cref{prop:num-successful} and \Cref{prop:num-unsuccessful} for our variant.

    \begin{lemma}
      \label{lem:num-successful-itrdh}
      Let \Cref{asm:step-assumption-itrdh} and \Cref{asm:D-bounded} be satisfied.
      Assume that \Cref{alg:tr-nonsmooth-diag} with \(\nu_k\) as in~\eqref{eq:vk-itrdh} in \cref{step:vk}, without \cref{step:sk1}, and with \(\Delta_k\) unchanged in \cref{step:sk} generates infinitely many successful iterations and that there exists \((f + h)_{\textup{low}} \in \R\) such that \((f + h)(x_k) \geq (f + h)_{\textup{low}}\) for all \(k \in \N\).
      Let \(\epsilon \in (0, \, 1)\).
      Then,
      \[
        |S(\epsilon)| \leq \frac{(f + h)(x_0) - (f + h)_{\textup{low}}}{\eta_1 \nu_{\min} \epsilon^2}.
      \]
    \end{lemma}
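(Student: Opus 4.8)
The plan is to bound the number of successful iterations in \(S(\epsilon)\) by exploiting the monotone decrease of the objective \(f + h\) across successful iterations, exactly as in the classical complexity argument. First I would observe that on any successful iteration \(k \in S(\epsilon)\) we have \(\rho_k \ge \eta_1\), which by the definition of \(\rho_k\) in \Cref{alg:tr-nonsmooth-diag} gives
\[
  (f + h)(x_k) - (f + h)(x_k + s_k) \ge \eta_1\bigl(m(0; x_k, D_k) - m(s_k; x_k, D_k)\bigr) = \eta_1\,\xi(\Delta_k; x_k, D_k).
\]
Since \(x_{k+1} = x_k + s_k\) on successful iterations and \(x_{k+1} = x_k\) otherwise, the objective \((f + h)(x_k)\) is nonincreasing in \(k\), so summing the per-iteration decreases over all \(k \in S(\epsilon)\) telescopes and is bounded above by the total decrease \((f + h)(x_0) - (f + h)_{\textup{low}}\).

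Next I would use the criticality condition defining \(I(\epsilon)\). For the variant, \(k \in I(\epsilon)\) means \(\nu_k^{-1/2}\xi(\Delta_k; x_k, D_k)^{1/2} > \epsilon\), hence \(\xi(\Delta_k; x_k, D_k) > \nu_k\,\epsilon^2\). Because \Cref{asm:D-bounded} together with the choice~\eqref{eq:vk-itrdh} yields \(\nu_k \ge \nu_{\min}\) as in~\eqref{eq:vk-min-itrdh}, every \(k \in S(\epsilon) \subseteq I(\epsilon)\) satisfies \(\xi(\Delta_k; x_k, D_k) > \nu_{\min}\,\epsilon^2\). Combining this with the decrease bound from the previous step, each such iteration contributes a decrease strictly greater than \(\eta_1\,\nu_{\min}\,\epsilon^2\).

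Finally I would assemble the telescoping sum:
\[
  (f + h)(x_0) - (f + h)_{\textup{low}} \ge \sum_{k \in S(\epsilon)} \bigl[(f + h)(x_k) - (f + h)(x_{k+1})\bigr] \ge |S(\epsilon)|\,\eta_1\,\nu_{\min}\,\epsilon^2,
\]
and dividing through by \(\eta_1\,\nu_{\min}\,\epsilon^2\) gives the claimed bound. The argument here is essentially routine and mirrors \Cref{prop:num-successful}; the one structural difference to be careful about is that the criticality measure is now built from \(\xi\) (the quadratic-model decrease) rather than \(\xi_{\textup{cp}}\), so the factor \(\kappa_{\textup{mdc}}\) appearing in \Cref{prop:num-successful} drops out and does not enter the final constant. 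The main point to verify cleanly is therefore simply that \Cref{asm:step-assumption-itrdh} and \Cref{asm:D-bounded} together guarantee the uniform lower bound \(\nu_k \ge \nu_{\min} > 0\) for the modified step-size rule, which is immediate from~\eqref{eq:vk-min-itrdh}; no genuine obstacle arises beyond tracking this change of criticality measure.
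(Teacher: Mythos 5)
Your argument is correct and follows exactly the same route as the paper's proof: bound the per-iteration decrease by \(\eta_1\,\xi(\Delta_k;x_k,D_k) > \eta_1\,\nu_k\,\epsilon^2 \ge \eta_1\,\nu_{\min}\,\epsilon^2\) using the modified criticality measure and~\eqref{eq:vk-min-itrdh}, then telescope over \(S(\epsilon)\). Your observation that \(\kappa_{\textup{mdc}}\) drops out of the constant compared with \Cref{prop:num-successful} is also exactly the distinction the paper relies on.
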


    \begin{proof}
      For \(k \in S(\epsilon)\), we have
      \begin{equation*}
        \begin{aligned}
          f(x_k) + h(x_k) - f(x_k + s_k) - h(x_k + s_k) &\ge \eta_1 (m(0; x_k, D_k) - m(s_k; x_k, D_k)) \\
          & = \eta_1 \xi(\Delta_k; x_k, D_k) \\
          & \ge \eta_1 \nu_k \epsilon^2\\
          & \ge \eta_1 \nu_{\min} \epsilon^2.
        \end{aligned}
      \end{equation*}
      As \((f + h)(x_k) \ge (f + h)_{\textup{low}}\), we sum the above inequalities for \(k \in S(\epsilon)\) and get
      \begin{equation*}
        (f+h)(x_0) - (f+h)_{\textup{low}} \ge \sum_{k\in S(\epsilon)} (f + h)(x_k) - (f + h)(x_{k+1}) \ge |S(\epsilon)|\eta_1 \epsilon^2 \nu_{\min}.
      \tag*{\qed}
      \end{equation*}
      \renewcommand{\qedsymbol}{}
    \end{proof}

    \begin{lemma}
      \label{lem:num-unsuccessful-itrdh}
      Under the assumptions of \Cref{lem:num-successful-itrdh},
      \[
        |U(\epsilon)| = O(\epsilon^{-2}).
      \]
    \end{lemma}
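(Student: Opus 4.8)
The plan is to imitate the counting argument behind \Cref{prop:num-unsuccessful}, but with the constant lower bound $\Delta_{\min}$ there replaced by the $\epsilon$-dependent radius bound supplied by \Cref{lem:delta-min-itrdh}. First I would turn~\eqref{eq:delta-min-itrdh} into a uniform lower bound on the trust-region radius throughout $I(\epsilon)$. By the (modified) definition of $I(\epsilon)$, every $i \in I(\epsilon)$ satisfies $\nu_i^{-1/2} \xi(\Delta_i; x_i, D_i)^{1/2} > \epsilon$; reading $I(\epsilon)$ as the initial run of iterations preceding the first drop of the criticality measure below $\epsilon$, it follows that $\min_{i \in [0, k]} \nu_i^{-1/2} \xi(\Delta_i; x_i, D_i)^{1/2} > \epsilon$ for every $k \in I(\epsilon)$. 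Substituting this into~\eqref{eq:delta-min-itrdh} gives $\Delta_k > c\,\epsilon$ for all $k \in I(\epsilon)$, where
\[
  c := \gamma_1 \sqrt{\frac{\nu_{\min}(1 - \eta_2)}{\kappa_{\textup{m}}}} \min\Bigl(1, \ \Delta_0 \frac{\sqrt{\kappa_{\textup{m}} \nu_{\min}^{-1}}}{\nu_0^{-1/2} \xi(\Delta_0; x_0, D_0)^{1/2}} \Bigr) > 0
\]
is independent of $\epsilon$. This is the analogue of the role played by $\Delta_{\min}$ in \Cref{prop:num-unsuccessful}.

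Next I would telescope the radius updates over $I(\epsilon)$. The update rules of \Cref{alg:tr-nonsmooth-diag} give $\Delta_{k+1} \le \gamma_2 \Delta_k$ on every unsuccessful iteration ($k \in U(\epsilon)$) and $\Delta_{k+1} \le \gamma_4 \Delta_k$ on every successful or very successful iteration ($k \in S(\epsilon)$), since $\gamma_2 < 1 < \gamma_3 \le \gamma_4$. Multiplying these bounds across $I(\epsilon)$ bounds $\Delta_{\bar k}$ at the last index $\bar k \in I(\epsilon)$ by $\Delta_0\, \gamma_4^{|S(\epsilon)|} \gamma_2^{|U(\epsilon)|}$ (up to a bounded off-by-one correction from the final transition). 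Combining with the lower bound $\Delta_{\bar k} > c\epsilon$ and taking $\log_{\gamma_2}$, which reverses inequalities because $\gamma_2 < 1$, produces, exactly as in the derivation of \Cref{prop:num-unsuccessful},
\[
  |U(\epsilon)| \le \log_{\gamma_2}\Bigl(\frac{c\,\epsilon}{\Delta_0}\Bigr) + |S(\epsilon)|\,|\log_{\gamma_2}(\gamma_4)|.
\]

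Finally I would check the order of each term. The second term is $O(\epsilon^{-2})$ directly from \Cref{lem:num-successful-itrdh}. For the first, write $\log_{\gamma_2}(c\epsilon/\Delta_0) = \log_{\gamma_2}(c/\Delta_0) + \log_{\gamma_2}(\epsilon)$; since $0 < \gamma_2 < 1$, the second summand equals $\ln(1/\epsilon)/\ln(1/\gamma_2)$, which grows like $\log(1/\epsilon)$ and is therefore $O(\epsilon^{-2})$, because the logarithm is dominated by any negative power of $\epsilon$ as $\epsilon \downarrow 0$. Hence both contributions are $O(\epsilon^{-2})$, and $|U(\epsilon)| = O(\epsilon^{-2})$.

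The step I expect to be the main obstacle is the first one: making the lower bound of \Cref{lem:delta-min-itrdh} uniform over $I(\epsilon)$. Its right-hand side carries a $\min_{i \in [0, k]}$ over \emph{all} past iterations, so one must argue that no earlier criticality measure has already fallen below $\epsilon$, i.e.\ that $I(\epsilon)$ genuinely consists of an initial block of consecutive iterations; this is precisely where the careful reading of the definition of $I(\epsilon)$ is essential. Once the clean bound $\Delta_k > c\epsilon$ is secured, the remainder is a faithful repetition of \Cref{prop:num-unsuccessful} with $\Delta_{\min}$ replaced by $c\epsilon$, the only genuinely new ingredient being the elementary fact that $\log(1/\epsilon) = O(\epsilon^{-2})$.
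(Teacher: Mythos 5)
Your proposal is correct and follows essentially the same route as the paper: both derive the lower bound $\Delta_{k(\epsilon)-1} > c\,\epsilon$ from \Cref{lem:delta-min-itrdh} using that the criticality measure stays above $\epsilon$ throughout the initial block $I(\epsilon)$, telescope the radius update rules to get $\Delta_{k(\epsilon)-1} \le \Delta_0 \gamma_2^{|U(\epsilon)|}\gamma_4^{|S(\epsilon)|}$, and take logarithms to conclude via \Cref{lem:num-successful-itrdh} and $\log(1/\epsilon) = O(\epsilon^{-2})$. The point you flag as the main obstacle is handled in the paper exactly as you anticipate, by defining $k(\epsilon)$ as the first iteration at which the measure drops below $\epsilon$.
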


    \begin{proof}
      Let \(k < k(\epsilon)\).
      Because
      \begin{equation*}
        \nu_k^{-1/2} \xi(\Delta_k; x_k, D_k)^{1/2} > \epsilon,
      \end{equation*}
      and 
      \begin{equation*}
        \nu_{k(\epsilon)}^{-1/2} \xi(\Delta_{k(\epsilon)}; x_{k(\epsilon)}, D_{k(\epsilon)})^{1/2} \leq \epsilon,
      \end{equation*}
      we have with \Cref{lem:delta-min-itrdh}
      \begin{equation*}
        \begin{aligned}
          \Delta_{k(\epsilon) - 1} &> \gamma_1 \sqrt{\frac{\nu_{\min}(1 - \eta_2)}{\kappa_{\textup{m}}}} \min \left(1, \Delta_0 \frac{\sqrt{\kappa_{\textup{m}} \nu_{\min}^{-1}}}{\nu_0^{-1/2} \xi(\Delta_0; x_0, D_0)^{1/2}} \right) \min_{i \in [0, k]} \nu_i^{-1} \xi(\Delta_i; x_i, D_i) \\
          & > \gamma_1 \sqrt{\frac{\nu_{\min}(1 - \eta_2)}{\kappa_{\textup{m}}}} \min \left(1, \Delta_0 \frac{\sqrt{\kappa_{\textup{m}} \nu_{\min}^{-1}}}{\nu_0^{-1/2} \xi(\Delta_0; x_0, D_0)^{1/2}} \right) \epsilon.
        \end{aligned}
      \end{equation*}
      For each successful iteration, \(\Delta_{k+1} \le \gamma_4 \Delta_k\), and for each unsuccessful iteration, \(\Delta_{k+1} \le \gamma_2 \Delta_k\), which implies
      \begin{equation*}
        \Delta_{k(\epsilon) - 1} \le \Delta_0 \gamma_2^{|U(\epsilon)|} \gamma_4^{|S(\epsilon)|}.
      \end{equation*}
      By taking the natural logarithm of the above inequality, we have
      \begin{equation*}
        \begin{aligned}
          |U(\epsilon)|\log(\gamma_2) +|S(\epsilon)| \log(\gamma_4) &\ge \log (\Delta_{k(\epsilon) - 1} / \Delta_0) \\
          & \ge \log \left(\gamma_1 \sqrt{\frac{\nu_{\min}(1 - \eta_2)}{\kappa_{\textup{m}}}} \min \left(\Delta_0^{-1}, \frac{\sqrt{\kappa_{\textup{m}} \nu_{\min}^{-1}}}{\nu_0^{-1/2} \xi(\Delta_0; x_0, D_0)^{1/2}} \right) \epsilon \right), \\
        \end{aligned}
      \end{equation*}
      so that, using the previous inequalities and \Cref{lem:num-successful-itrdh}
      \begin{subequations}
        \begin{align*}
          |U(\epsilon)| &\le \log_{\gamma_2} \left(\gamma_1 \sqrt{\frac{\nu_{\min}(1 - \eta_2)}{\kappa_{\textup{m}}}} \min \left(\Delta_0^{-1}, \frac{\sqrt{\kappa_{\textup{m}} \nu_{\min}^{-1}}}{\nu_0^{-1/2} \xi(\Delta_0; x_0, D_0)^{1/2}} \right) \epsilon \right) + |S(\epsilon)| |\log_{\gamma_2}(\gamma_4)| \\
          &\le O(|\log_{\gamma_2} (\epsilon)|) + O(\epsilon^{-2}) \\
          &= O(\epsilon^{-2}).
          \tag*{\qed} 
        \end{align*}
      \end{subequations}
      \renewcommand{\qedsymbol}{}
    \end{proof}

    \Cref{lem:num-successful-itrdh} and \Cref{lem:num-unsuccessful-itrdh} also indicate that \(I(\epsilon)\) is \(O(\epsilon^{-2})\), and \(\liminf \nu_k^{-1/2} \xi(\Delta_k; x_k, D_k)^{1/2} = 0\).
    With~\eqref{eq:asm-mdc-itrdh} in \Cref{asm:step-assumption-itrdh}, we also have \(\liminf \nu_k^{-1/2} \xi_{\textup{cp}}(\Delta_k; x_k, \nu_k)^{1/2} = 0\).

    Finally, we emphasize the changes made in this section to \Cref{alg:tr-nonsmooth-diag}:
    \begin{itemize}
      \item in \cref{step:vk}, \(\nu_k = 1 / (\|d_k\|_{\infty} + \alpha^{-1} \Delta_k^{-1})\) became \(\nu_k = 1 / (\|d_k\|_{\infty} + \alpha^{-1})\),
      \item \cref{step:sk1} of \Cref{alg:tr-nonsmooth-diag} was removed,
      \item \(\Delta_k\) was left unchanged in \cref{step:sk} of \Cref{alg:tr-nonsmooth-diag},
      \item we used \(\nu_k^{-1/2}\xi(\Delta_k; x_k, D_k)^{1/2}\) as a criticality measure, instead of \(\nu_k^{-1/2}\xi_{\textup{cp}}(\Delta_k; x_k, \nu_k)\), however, when~\eqref{eq:asm-diff-model-predict-itrdh} holds,~\eqref{eq:itrdh-crit} indicates that if the new criticality measure is small, then the criticality measure of \Cref{sec:trdh} is also small,
      \item in \Cref{prop:delta-up-bnd-itrdh}, there is no \(\Delta_{\min} > 0\) independent of \(k\) as in \Cref{prop:delta-succ},
      \item under the assumptions of \Cref{lem:num-successful-itrdh}, which are similar to those of \Cref{prop:num-successful}, the complexity bound is still in \(O(\epsilon^{-2})\), and \(\liminf \nu_k^{-1/2} \xi(\Delta_k; x_k, D_k)^{1/2} = \liminf \nu_k^{-1/2} \xi_{\textup{cp}}(\Delta_k; x_k, \nu_k)^{1/2} = 0\).
    \end{itemize}

    \section{Implementation and numerical experiments}%
      \label{sec:numerical}

      Our Julia implementation of \Cref{alg:tr-nonsmooth-diag} is available from our RegularizedOptimization package \citep{baraldi-orban-regularized-optimization-2022} under the name TRDH\@.
      The latter can be used to solve~\eqref{eq:nlp} directly, or as subproblem solver in TR \citep[Algorithm~\(3.1\)]{aravkin-baraldi-orban-2022}, instead of R2 \citep[Algorithm~\(6.1\)]{aravkin-baraldi-orban-2022}.
      Below, we use the notation TR-R2 and TR-TRDH to denote the application of TR to solve~\eqref{eq:nlp} with R2 or TRDH as subproblem solver, respectively.

      In our experiments, TR uses either an LSR1 or an LBFGS quasi-Newton Hessian approximation with memory \(5\), as implemented in the LinearOperators package \citep{orban-siqueira-linearoperators-2020}.
      The same package implements diagonal quasi-Newton operators in TRDH using the spectral update, the PSB update~\eqref{eq:qc-indef-soln}, and the update of \citet{andrei-2019}~\eqref{eq:qc-andrei-soln} with the modification that we scaled the weak secant equations as
      \begin{equation}
        \label{eq:scaled-secant}
        \tilde s_{k-1}^T B \tilde s_{k-1} = \tilde s_{k-1}^T \tilde y_{k-1}
      \end{equation}
      in~\eqref{eq:qc-indef-problem} and~\eqref{eq:qc-andrei-problem}, where \(\tilde s_{k-1} := s_{k-1} / \|s_{k-1}\|_2\), and \(\tilde y_{k-1} := y_{k-1} / \|s_{k-1}\|_2\), in order to alleviate numerical issues as \(s_{k-1}\) approaches zero.

      The indefinite proximal operators of \Cref{ex:iprox-l0} and \Cref{ex:iprox-l1} are implemented as part of the ShiftedProximalOperators package \citep{baraldi-orban-shifted-proximal-operators-2022}.

      When using TRDH as the main solver, we initialize \(D_0 := \nu_0^{-1} I\) for \(\nu_0 > 0\) given below.
      When using TR-TRDH with a spectral diagonal quasi-Newton approximation, denoted TR-TRDH-Spec, we set the initial diagonal Hessian approximation in TRDH at iteration \(k\) of TR to \(D_{k, 0} := \nu_k^{-1} I\) (as we would initialize R2 in TR-R2).
      When using TR-TRDH with the PSB or the Andrei quasi-Newton approximations, denoted TR-TRDH-PSB and TR-TRDH-Andrei, respectively, we set \(D_{k, 0} := \diag (B_k)\), where \(B_k\) is the quasi-Newton Hessian approximation at iteration \(k\) of TR\@.

      We set \(\psi (s; x_k) := h(x_k + s)\).
      We initialize \(\nu_0 = 1\) for R2 and TRDH used by themselves.
      The stopping criteria that we used for TR, TRDH and R2 (as subproblem solvers or main solvers)  are based on \(\xi_{\textup{cp}}(\Delta_k; x_k, \nu_k)^{1/2}\).
      We set \(\Delta_0 = 1\) for TR and for TRDH used as main solver.
      For TR-TRDH, at iteration \(k\) of TR, the initial value of the TRDH trust-region radius is \(\Delta_{k, 0} = \min (\Delta_k, \beta \|s_{k,1}\|)\) / 10, where \(\Delta_k\) is the TR trust-region radius at iteration \(k\), and \(s_{k, 1}\) is the first step of the \(k\)-th TR subproblem.
      In other words, the initial TRDH trust-region radius to solve the \(k\)-th TR subproblem is a tenth of the trust-region radius of this \(k\)-th subproblem.

      For all solvers, the outer iterations terminate as soon as
      \begin{equation}
        \label{eq:stop-crit}
        \nu_k^{-1/2} \xi_{\textup{cp}}(\Delta_k; x_k, \nu_k)^{1/2} < \epsilon_a + \epsilon_r \nu_0^{-1/2} \xi_{\textup{cp}}(\Delta_0; x_0, \nu_0)^{1/2},
      \end{equation}
      where \(\epsilon_a > 0\) and \(\epsilon_r > 0\) are an absolute and a relative tolerance.
      A round of inner iterations in TR terminates as soon as the stationarity measure of the inner solver satisfies~\eqref{eq:stop-crit}, with \(\tilde \epsilon_a = 10^{-5}\) for the first TR iteration, otherwise \(\tilde \epsilon_a = \max (\epsilon_{a, i}, \min (10^{-2}, \nu_k^{-1/2}\xi_{\textup{cp}}(\Delta_k; x_k, \nu_k)^{1/2})) \),
      and \(\tilde \epsilon_r = \epsilon_{r, i}\), where \(\epsilon_{a, i} > 0 \) and \(\epsilon_{r, i} > 0 \) are some absolute and relative inner tolerances, and \(s_{k,1}\) is the first iterate of the solution of the trust-region subproblem.
      In the experiments below, we use \(\epsilon_{a, i} = 10^{-3}\) and \(\epsilon_{r, i} = 10^{-6}\), except in \Cref{sec:bpdn}, where we use \(\epsilon_{a, i} = 10^{-5}\).

      Additionally, we test the variant presented in \Cref{sec:itrdh}, which is denoted ``iTRDH'' (\emph{indefinite trust-region with diagonal Hessian approximations}) in our results.
      When in use, the stopping criterion is based on \(\nu_k^{-1/2}\xi(\Delta_k; x_k, D_k)^{1/2}\), instead of \(\nu_k^{-1/2}\xi_{\textup{cp}}(\Delta_k; x_k, \nu_k)^{1/2}\). 

      In our results, we report
      \begin{itemize}
        \item the final \(f(x)\);
        \item the final \(h(x) / \lambda\);
        \item the final stationarity measure \(\sqrt{\xi / \nu}\);
        \item \(\|x - x_\star\|_2\), where \(x_\star\) is the exact solution, if it is available;
        \item the number of smooth objective evaluations \(\#~f\);
        \item the number of gradient evaluations \(\#~\nabla f\);
        \item the number of proximal operator evaluations \(\#~\textup{prox}\);
        \item the elapsed time \(t\) in seconds.  
      \end{itemize}
      Because our implementations are not yet perfectly optimized in terms of memory allocations, we neglect the elapsed time in our interpretations, and only report it in the tables as an indicator for the reader.

      In our test cases, available from the RegularizedProblems package \citep{baraldi-orban-regularized-problems-2022}, the computational cost of evaluating the gradient is significantly higher than a proximal evaluation or an objective evaluation.
      In all cases, except in case of failure, all solvers find similar final solutions, and we only show one for illustration.

      \subsection{Basis pursuit denoise (BPDN)}%
        \label{sec:bpdn}

        Our first test case is the basis pursuit denoise (BPDN) problem \citep{tibshirani-1996, donoho-2006}.
        The stopping tolerances \(\epsilon_a\) and \(\epsilon_r\) are set to \(10^{-5}\).
        In this subsection only, when using TR, we set \(\epsilon_{a, i} = 10^{-5}\) in order to have accurate subproblem solves, which leads to performing fewer gradient evaluations without sacrificing too many proximal evaluations.
        Let \(m = 200\), \(n = 512\), \(b = A x_\star + \epsilon\), where \(\epsilon \sim \mathcal{N}(0, 0.01)\), \(A \in \R^{m \times n}\) has orthonormal rows, and \(x_\star\) is a vector of zeros, except for \(10\) of its components that are set to \(\pm 1\).
        We solve
        \begin{equation}
          \label{eq:bpdn}
          \minimize{x} \tfrac{1}{2} \|A x - b\|_2^2 + h(x),
        \end{equation}
        where \(h(x) = \lambda \|x\|_0\).
        As in \citep{aravkin-baraldi-orban-2022}, we use \(\lambda = 0.1 \|A^T b\|_{\infty}\).

        \begin{table}[ht]
  \centering
  \small
  \caption{%
  \label{tbl:bpdn}
  BPDN~\eqref{eq:bpdn} statistics with $h = \lambda \|\cdot\|_0$.
  All variants of TR use an LSR1 Hessian approximation, and are given a maximum of \(100\) inner iterations.
  The optimal objective value is $f(x_\star) = 9.90e-03$.}
  \begin{tabular}{rrrrrrrrr}
    \hline\hline
    solver & $f(x)$ & $h(x)/\lambda$ & $\sqrt{\xi / \nu}$ & $\|x-x_\star\|_2$ & $\#f$ & $\#\nabla f$ & $\#\prox{}$ & $t$ ($s$) \\\hline
    R2 & \(9.44\)e\(-03\) & \(10\) & \(4.4\)e\(-03\) & \(4.7\)e\(-02\) & \(30\) & \(31\) & \(30\) & \(8.0\)e\(-03\) \\      
    TRDH-Spec & \(9.44\)e\(-03\) & \(10\) & \(4.0\)e\(-03\) & \(4.7\)e\(-02\) & \(9\) & \(9\) & \(17\) & \(1.7\)e\(-02\) \\ 
    iTRDH-Spec & \(9.44\)e\(-03\) & \(10\) & \(3.4\)e\(-03\) & \(4.7\)e\(-02\) & \(10\) & \(9\) & \(9\) & \(4.0\)e\(-03\) \\
    TRDH-PSB & \(9.44\)e\(-03\) & \(10\) & \(3.7\)e\(-03\) & \(4.7\)e\(-02\) & \(18\) & \(15\) & \(35\) & \(1.1\)e\(-02\) \\
    iTRDH-PSB & \(9.44\)e\(-03\) & \(10\) & \(2.6\)e\(-03\) & \(4.7\)e\(-02\) & \(20\) & \(16\) & \(19\) & \(5.0\)e\(-03\) \\
    TRDH-Andrei & \(9.44\)e\(-03\) & \(10\) & \(4.5\)e\(-03\) & \(4.7\)e\(-02\) & \(48\) & \(29\) & \(95\) & \(1.0\)e\(-02\) \\
    iTRDH-Andrei & \(9.44\)e\(-03\) & \(10\) & \(3.7\)e\(-03\) & \(4.7\)e\(-02\) & \(59\) & \(35\) & \(58\) & \(3.1\)e\(-02\) \\
    TR-R2 & \(9.44\)e\(-03\) & \(10\) & \(1.7\)e\(-05\) & \(4.7\)e\(-02\) & \(23\) & \(23\) & \(40\) & \(1.8\)e\(-02\) \\   
    TR-TRDH-PSB & \(9.44\)e\(-03\) & \(10\) & \(1.6\)e\(-05\) & \(4.7\)e\(-02\) & \(21\) & \(21\) & \(58\) & \(6.9\)e\(-02\) \\
    TR-iTRDH-PSB & \(9.44\)e\(-03\) & \(10\) & \(1.6\)e\(-05\) & \(4.7\)e\(-02\) & \(21\) & \(21\) & \(39\) & \(2.5\)e\(-02\) \\
    TR-TRDH-Andrei & \(9.44\)e\(-03\) & \(10\) & \(1.6\)e\(-05\) & \(4.7\)e\(-02\) & \(20\) & \(20\) & \(127\) & \(2.5\)e\(-02\) \\
    TR-iTRDH-Andrei & \(9.44\)e\(-03\) & \(10\) & \(1.4\)e\(-05\) & \(4.7\)e\(-02\) & \(20\) & \(20\) & \(100\) & \(3.0\)e\(-02\) \\
    TR-TRDH-Spec & \(9.44\)e\(-03\) & \(10\) & \(1.9\)e\(-05\) & \(4.7\)e\(-02\) & \(21\) & \(21\) & \(46\) & \(1.8\)e\(-02\) \\
    TR-iTRDH-Spec & \(9.44\)e\(-03\) & \(10\) & \(1.9\)e\(-05\) & \(4.7\)e\(-02\) & \(21\) & \(21\) & \(33\) & \(1.8\)e\(-02\) \\\hline\hline
  \end{tabular}
\end{table}

        \begin{figure}[ht]%
          \centering
          \includetikzgraphics[width = 0.49\linewidth]{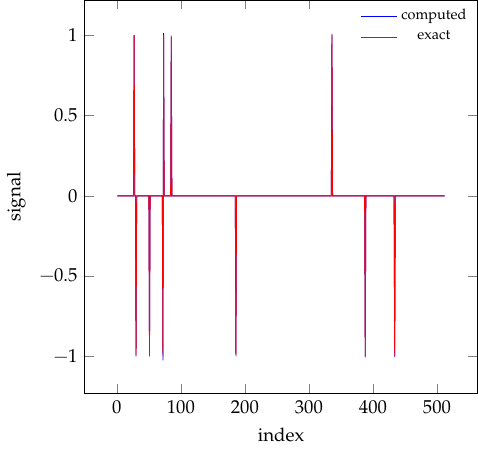}
          \hfill
          \includetikzgraphics[width = 0.49\linewidth]{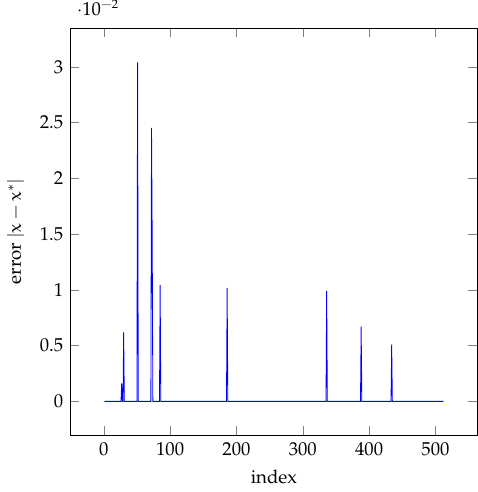}
          \caption{%
            \label{fig:bpdn-sol}
            Solution of~\eqref{eq:bpdn} (left), and error (right) with TRDH-Spec.
          }
        \end{figure}

        \Cref{fig:bpdn-sol} shows the solution of~\eqref{eq:bpdn} with TRDH-Spec.
        \Cref{tbl:bpdn} reports the statistics for the different solvers and shows that TRDH-Spec and TRDH-PSB perform fewer objective and gradient evaluations than R2.
        TRDH-Andrei performs worse that R2 on this problem.
        As expected, the ``iTRDH'' variants require fewer proximal operator evaluations.
        They result in similar numbers of objective and gradient evaluations in the cases of TRDH-Spec and TRDH-PSB, but require more evaluations in the case of TRDH-Andrei.

        All TR-TRDH and TR-iTRDH solvers perform fewer objective and gradient evaluations than TR-R2.

        We now solve the constrained variant
        \begin{equation}%
          \label{eq:bpdn-cstr}
          \minimize{x} \tfrac{1}{2} \|A x - b\|_2^2 + h(x) \quad \st \ x \ge 0,
        \end{equation}
        where each element in \(x_\star\) is either \(0\) or \(1\).

        \begin{table}[ht]
  \centering
  \small
  \caption{%
  \label{tbl:bpdn-cstr}
  Constrained BPDN~\eqref{eq:bpdn-cstr} statistics with $h = \lambda \|\cdot\|_0$.
  All variants of TR use an LSR1 Hessian approximation, and are given a maximum of \(100\) inner iterations.
  The optimal objective value is $f(x_\star) = 8.98e-03$.}
  \begin{tabular}{rrrrrrrrr}
    \hline\hline
    solver & $f(x)$ & $h(x)/\lambda$ & $\sqrt{\xi / \nu}$ & $\|x-x_\star\|_2$ & $\#f$ & $\#\nabla f$ & $\#\prox{}$ & $t$ ($s$) \\\hline
    R2 & \(8.70\)e\(-03\) & \(10\) & \(3.9\)e\(-03\) & \(3.6\)e\(-02\) & \(27\) & \(28\) & \(27\) & \(1.9\)e\(-02\) \\      
    TRDH-Spec & \(8.70\)e\(-03\) & \(10\) & \(3.6\)e\(-03\) & \(3.6\)e\(-02\) & \(8\) & \(8\) & \(15\) & \(4.0\)e\(-03\) \\ 
    iTRDH-Spec & \(8.70\)e\(-03\) & \(10\) & \(3.0\)e\(-03\) & \(3.6\)e\(-02\) & \(9\) & \(8\) & \(8\) & \(2.0\)e\(-03\) \\
    TRDH-PSB & \(8.70\)e\(-03\) & \(10\) & \(3.6\)e\(-03\) & \(3.6\)e\(-02\) & \(11\) & \(11\) & \(21\) & \(3.0\)e\(-03\) \\
    iTRDH-PSB & \(8.70\)e\(-03\) & \(10\) & \(3.7\)e\(-03\) & \(3.6\)e\(-02\) & \(12\) & \(11\) & \(11\) & \(3.0\)e\(-03\) \\
    TRDH-Andrei & \(8.70\)e\(-03\) & \(10\) & \(4.3\)e\(-03\) & \(3.6\)e\(-02\) & \(48\) & \(29\) & \(95\) & \(1.6\)e\(-02\) \\
    iTRDH-Andrei & \(8.70\)e\(-03\) & \(10\) & \(4.0\)e\(-03\) & \(3.6\)e\(-02\) & \(59\) & \(35\) & \(58\) & \(2.0\)e\(-02\) \\
    TR-R2 & \(8.70\)e\(-03\) & \(10\) & \(1.9\)e\(-05\) & \(3.6\)e\(-02\) & \(20\) & \(20\) & \(32\) & \(1.2\)e\(-02\) \\   
    TR-TRDH-PSB & \(8.70\)e\(-03\) & \(10\) & \(2.1\)e\(-05\) & \(3.6\)e\(-02\) & \(19\) & \(19\) & \(50\) & \(1.8\)e\(-02\) \\
    TR-iTRDH-PSB & \(8.70\)e\(-03\) & \(10\) & \(2.1\)e\(-05\) & \(3.6\)e\(-02\) & \(19\) & \(19\) & \(34\) & \(2.5\)e\(-02\) \\
    TR-TRDH-Andrei & \(8.70\)e\(-03\) & \(10\) & \(1.9\)e\(-05\) & \(3.6\)e\(-02\) & \(20\) & \(20\) & \(63\) & \(1.8\)e\(-02\) \\
    TR-iTRDH-Andrei & \(8.70\)e\(-03\) & \(10\) & \(1.5\)e\(-05\) & \(3.6\)e\(-02\) & \(20\) & \(20\) & \(51\) & \(7.0\)e\(-03\) \\
    TR-TRDH-Spec & \(8.70\)e\(-03\) & \(10\) & \(2.0\)e\(-05\) & \(3.6\)e\(-02\) & \(19\) & \(19\) & \(28\) & \(1.0\)e\(-02\) \\
    TR-iTRDH-Spec & \(8.70\)e\(-03\) & \(10\) & \(2.0\)e\(-05\) & \(3.6\)e\(-02\) & \(19\) & \(19\) & \(23\) & \(1.2\)e\(-02\) \\\hline\hline
  \end{tabular}
\end{table}

        \begin{figure}[ht]%
          \centering
          \includetikzgraphics[width = 0.49\linewidth]{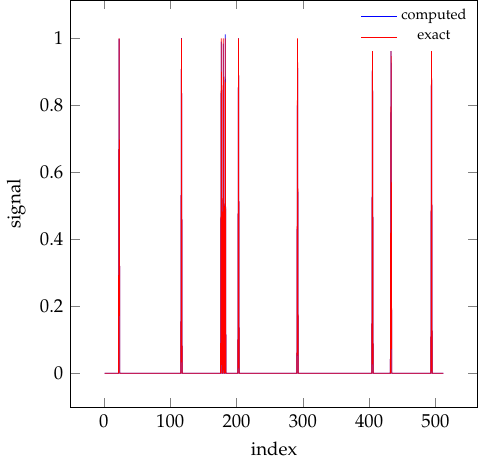}
          \hfill
          \includetikzgraphics[width = 0.49\linewidth]{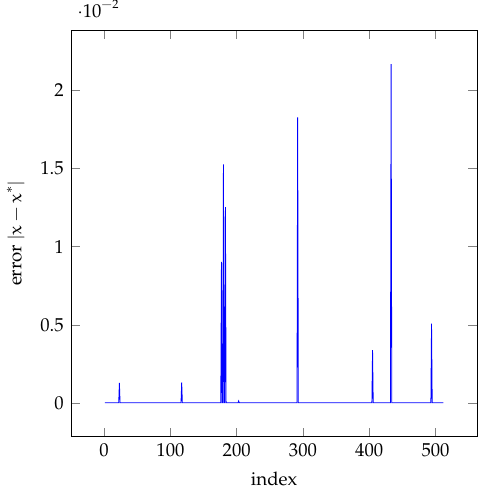}
          \caption{%
            \label{fig:bpdn-cstr-sol}
            Solution of~\eqref{eq:bpdn-cstr} (left), and error (right) with TR-TRDH-Spec.
          }
        \end{figure}

        \Cref{fig:bpdn-cstr-sol} shows the solution of~\eqref{eq:bpdn-cstr} with TRDH-Spec, and \Cref{tbl:bpdn-cstr} reports the statistics for the different solvers.
        We observe that TRDH-Spec, iTRDH-Spec, TRDH-PSB and iTRDH-PSB used as main solvers outperform R2 in terms of number of objective and gradient evaluations.
        TR-TRDH and TR-iTRDH perform similarly to TR-R2.

      \subsection{Sparse nonnegative matrix factorization (NNMF)}

        Our next test case is a variant of the NNMF problem of \citet{kim-park-2008}.
        Let \(A \in \R^{m \times n}\) have nonnegative entries, where each row represents a feature and each column represents an observation.
        We wish to factorize \(A \approx WH\) by separating \(A\) into \(k < \min(m, n)\) clusters, where \(W \in \R^{m \times k}\), \(H \in \R^{k \times n}\) both have nonnegative entries and \(H\) is sparse.
        The problem is stated as
        \begin{equation}%
          \label{eq:nnmf}
          \minimize{W, H} \tfrac{1}{2} \|A - W H\|_F^2 + h(H) \quad \st \ W, H \ge 0,
        \end{equation}
        where \(h(H) = \lambda \|\vec{H}\|_0\) and \(\vec{}\) stacks the columns of a matrix to form a vector.
        In our experiments, each observation is generated using a mixture of Gaussians.
        Negative elements in the matrix \(A\) generated are reset to zero.

        \begin{table}[ht]
  \centering
  \small
  \caption{%
  \label{tbl:nnmf}
  NNMF~\eqref{eq:nnmf} statistics with $h = \lambda \|\cdot\|_0$.
  All variants of TR use an LSR1 Hessian approximation, and are given a maximum of \(100\) inner iterations.}
  \begin{tabular}{rrrrrrrr}
    \hline\hline
    solver & $f(x)$ & $h(x) / \lambda$ & $\sqrt{\xi / \nu}$ & $\#f$ & $\#\nabla f$ & $\#\prox{}$ & $t$ ($s$) \\\hline
    R2 & \(2.84\)e\(+03\) & \(0\) & \(2.2\)e\(-04\) & \(2\) & \(2\) & \(2\) & \(0.0\)e\(+00\) \\
    TRDH-Spec & \(1.25\)e\(+02\) & \(50\) & \(7.7\)e\(-02\) & \(47\) & \(29\) & \(93\) & \(5.0\)e\(-03\) \\
    iTRDH-Spec & \(1.25\)e\(+02\) & \(50\) & \(6.5\)e\(-02\) & \(48\) & \(29\) & \(47\) & \(4.0\)e\(-03\) \\
    TRDH-PSB & \(1.70\)e\(+02\) & \(62\) & \(5.7\)e\(+00\) & \(501\) & \(352\) & \(1000\) & \(5.6\)e\(-02\) \\
    iTRDH-PSB & \(1.70\)e\(+02\) & \(62\) & \(8.4\)e\(+00\) & \(501\) & \(352\) & \(500\) & \(5.5\)e\(-02\) \\
    TRDH-Andrei & \(2.73\)e\(+02\) & \(54\) & \(5.9\)e\(+00\) & \(501\) & \(283\) & \(1000\) & \(5.9\)e\(-02\) \\
    iTRDH-Andrei & \(2.73\)e\(+02\) & \(54\) & \(7.0\)e\(+00\) & \(501\) & \(283\) & \(500\) & \(5.6\)e\(-02\) \\
    TR-R2 & \(1.25\)e\(+02\) & \(50\) & \(5.0\)e\(-03\) & \(212\) & \(115\) & \(4983\) & \(2.4\)e\(-01\) \\
    TR-TRDH-PSB & \(1.25\)e\(+02\) & \(50\) & \(7.2\)e\(-03\) & \(197\) & \(104\) & \(9826\) & \(7.3\)e\(-01\) \\
    TR-iTRDH-PSB & \(1.25\)e\(+02\) & \(50\) & \(6.4\)e\(-03\) & \(129\) & \(71\) & \(3759\) & \(4.8\)e\(-01\) \\
    TR-TRDH-Andrei & \(1.25\)e\(+02\) & \(50\) & \(2.9\)e\(-03\) & \(170\) & \(91\) & \(8746\) & \(6.1\)e\(-01\) \\
    TR-iTRDH-Andrei & \(1.25\)e\(+02\) & \(50\) & \(6.8\)e\(-03\) & \(152\) & \(83\) & \(4809\) & \(6.1\)e\(-01\) \\
    TR-TRDH-Spec & \(1.25\)e\(+02\) & \(50\) & \(4.8\)e\(-03\) & \(126\) & \(69\) & \(4793\) & \(3.6\)e\(-01\) \\
    TR-iTRDH-Spec & \(1.25\)e\(+02\) & \(50\) & \(3.6\)e\(-03\) & \(162\) & \(82\) & \(2503\) & \(3.5\)e\(-01\) \\\hline\hline
  \end{tabular}
\end{table}

        \begin{figure}[ht]%
          \centering
          \includetikzgraphics[width = 0.48\linewidth]{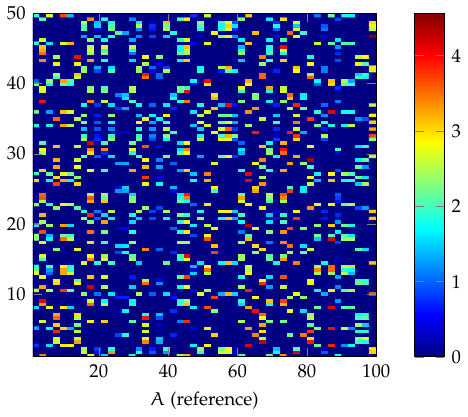}
          \includetikzgraphics[width = 0.48\linewidth]{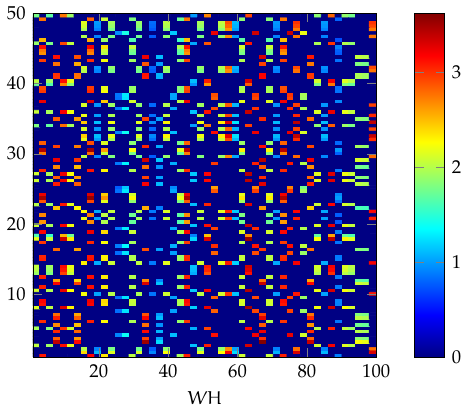}
          \includetikzgraphics[width = 0.48\linewidth]{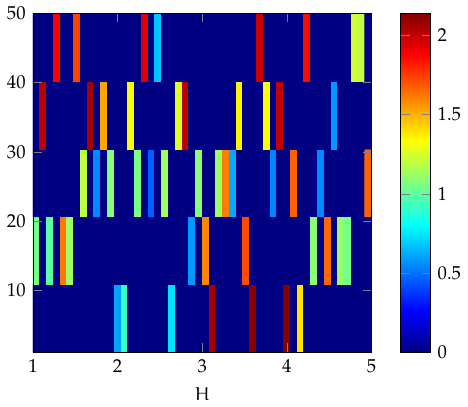}
          \includetikzgraphics[width = 0.48\linewidth]{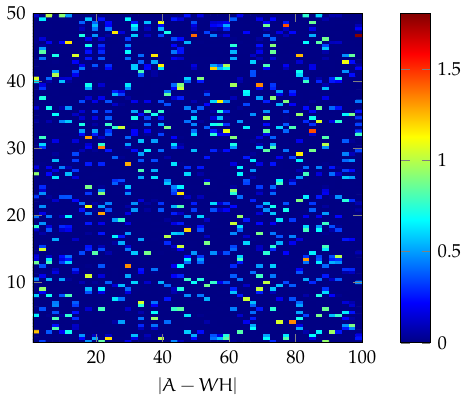}
          \caption{%
            \label{fig:nnmf-sol}
            Solution of~\eqref{eq:nnmf} with TR-TRDH-PSB\@.
          }
        \end{figure}

        We set \(m = 100\), \(n = 50\), \(k = 5\), \(\lambda = 10^{-1}\).
        The stopping tolerances \(\epsilon_a\) and \(\epsilon_r\) are set to \(10^{-5}\).
        \Cref{fig:nnmf-sol} shows the solution of~\eqref{eq:nnmf} with TR-TRDH-PSB\@.
        The statistics are reported in \Cref{tbl:nnmf}.
        We observe that R2 is trapped in a spurious stationary point, and that all TRDH and iTRDH solvers except TRDH-Spec and iTRDH-Spec reach their maximum number of iterations.
        All solvers using TR perform well with fewer objective and gradient evaluations than TR-R2.
        The number of proximal operator calls is lower for all TR-iTRDH variants.

      \subsection{Nonlinear support vector machine (SVM)}

        We now consider the nonlinear SVM described by \citep{aravkin-baraldi-orban-2022b} to classify digits of the MNIST dataset as either \(1\) or \(7\)---the other digits are removed.
        Let \(m\) be the number of images and \(n\) the vectorized image size.
        The problem reads
        \begin{equation}%
          \label{eq:nlin-svm}
          \minimize{x} \tfrac{1}{2} \|\textbf{1} - \tanh (b \odot (A^T x)) \|^2 + h(x),
        \end{equation}
        where \(A \in \R^{m \times n}\) is the data matrix, \(b\) is the vector of labels with values \(\pm 1\) for all its components, \(\odot\) denotes the elementwise product between two vectors, and \(h(x) = \lambda \|x\|_1\).
        We use \(n = 784\), \(m = 13007\) in the training set, \(m = 2163\) in the testing set, \(\lambda = 0.1\) and we initialize the problem at \(x = e\), the vector of ones.
        We set the absolute and relative stopping tolerances \(\epsilon_a\) and \(\epsilon_r\) to \(10^{-4}\).

        \begin{table}[ht]
  \centering
  \small
  \caption{%
  \label{tbl:svm}
  SVM~\eqref{eq:nlin-svm} statistics with $h = \lambda \|\cdot\|_1$.
  All variants of TR use an LBFGS Hessian approximation, and are given a maximum of \(100\) inner iterations.
  The train and test accuracies are the percentage of images correctly classified, computed by counting the number of elements of the residual \(\textbf{1} - \tanh (b \odot (A^T x))\) that are lower than \(1\) for the train and test problems respectively.
  }
  \begin{tabular}{rrrrrrrrr}
    \hline\hline
    solver & $f(x)$ & $h(x)/\lambda$ & $\sqrt{\xi / \nu}$ & (Train, Test) & $\#f$ & $\#\nabla f$ & $\#\prox{}$ & $t$ ($s$) \\\hline
    R2 & \(2.18\)e\(+02\) & \(2.4\)e\(+03\) & \(1.2\)e\(-01\) & \((99.3, 98.8)\) & \(265\) & \(199\) & \(265\) & \(3.7\)e\(+00\) \\
    TRDH-Spec & \(2.18\)e\(+02\) & \(2.4\)e\(+03\) & \(5.9\)e\(-02\) & \((99.3, 98.9)\) & \(306\) & \(191\) & \(611\) & \(4.4\)e\(+00\) \\
    iTRDH-Spec & \(2.18\)e\(+02\) & \(2.4\)e\(+03\) & \(5.9\)e\(-02\) & \((99.3, 98.9)\) & \(307\) & \(191\) & \(306\) & \(3.9\)e\(+00\) \\
    TRDH-PSB & \(2.38\)e\(+02\) & \(2.4\)e\(+03\) & \(4.5\)e\(-01\) & \((99.1, 98.8)\) & \(1001\) & \(686\) & \(2000\) & \(1.3\)e\(+01\) \\
    iTRDH-PSB & \(2.38\)e\(+02\) & \(2.4\)e\(+03\) & \(4.3\)e\(-01\) & \((99.1, 98.8)\) & \(1001\) & \(686\) & \(1000\) & \(1.3\)e\(+01\) \\
    TRDH-Andrei & \(3.00\)e\(+02\) & \(2.9\)e\(+03\) & \(1.2\)e\(+00\) & \((99.1, 99.0)\) & \(1001\) & \(418\) & \(2000\) & \(9.1\)e\(+00\) \\
    iTRDH-Andrei & \(3.00\)e\(+02\) & \(2.9\)e\(+03\) & \(2.7\)e\(+00\) & \((99.1, 99.0)\) & \(1001\) & \(418\) & \(1000\) & \(8.8\)e\(+00\) \\
    TR-R2 & \(2.18\)e\(+02\) & \(2.4\)e\(+03\) & \(4.7\)e\(-03\) & \((99.3, 98.8)\) & \(361\) & \(361\) & \(1512\) & \(5.3\)e\(+00\) \\
    TR-TRDH-PSB & \(2.18\)e\(+02\) & \(2.4\)e\(+03\) & \(4.7\)e\(-03\) & \((99.3, 98.8)\) & \(316\) & \(316\) & \(3882\) & \(6.5\)e\(+00\) \\
    TR-iTRDH-PSB & \(2.18\)e\(+02\) & \(2.4\)e\(+03\) & \(4.7\)e\(-03\) & \((99.3, 98.8)\) & \(377\) & \(377\) & \(2242\) & \(7.8\)e\(+00\) \\
    TR-TRDH-Andrei & \(2.18\)e\(+02\) & \(2.4\)e\(+03\) & \(4.7\)e\(-03\) & \((99.3, 98.9)\) & \(396\) & \(396\) & \(27155\) & \(1.4\)e\(+01\) \\
    TR-iTRDH-Andrei & \(2.18\)e\(+02\) & \(2.4\)e\(+03\) & \(4.5\)e\(-03\) & \((99.3, 98.9)\) & \(283\) & \(283\) & \(11418\) & \(1.2\)e\(+01\) \\
    TR-TRDH-Spec & \(2.18\)e\(+02\) & \(2.4\)e\(+03\) & \(4.7\)e\(-03\) & \((99.3, 98.8)\) & \(333\) & \(333\) & \(3390\) & \(7.5\)e\(+00\) \\
    TR-iTRDH-Spec & \(2.18\)e\(+02\) & \(2.4\)e\(+03\) & \(4.7\)e\(-03\) & \((99.3, 98.8)\) & \(472\) & \(472\) & \(1791\) & \(9.3\)e\(+00\) \\\hline\hline
  \end{tabular}
\end{table}

        \begin{figure}[ht]%
          \centering
          \includetikzgraphics[width = 0.48\linewidth]{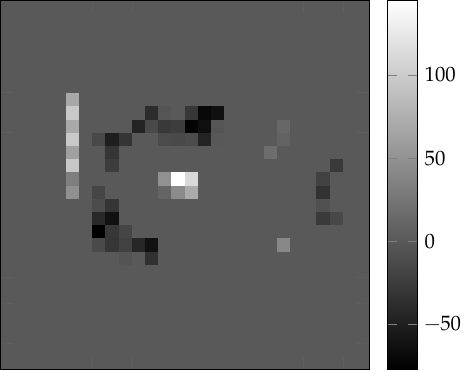}
          \hfill
          \begin{subfigure}{0.48\linewidth}
            \centering
            \includetikzgraphics[width = 0.4\linewidth]{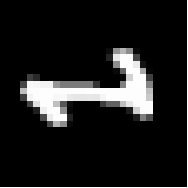}%
            \includetikzgraphics[width = 0.4\linewidth]{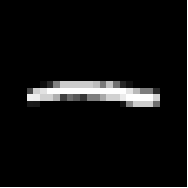}%
            \\
            \includetikzgraphics[width = 0.4\linewidth]{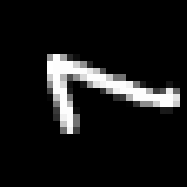}%
            \includetikzgraphics[width = 0.4\linewidth]{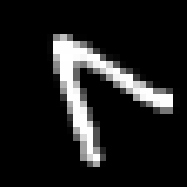}%
          \end{subfigure}
          \caption{%
            \label{fig:svm-sol}
            Solution map of~\eqref{eq:nlin-svm} with TR-TRDH-Spec (left), and sample 1's (right, top row) and 7's (right, bottom row) rotated digits from the MNIST dataset.
          }
        \end{figure}

        The solution map of~\eqref{eq:nlin-svm} representing the most important pixels to classify the images between \(1\) and \(7\) is shown in \Cref{fig:svm-sol} for the solver TR-TRDH-Spec, along with a few sample digits.
        We observe that the mid-height pixels are selected as the most important ones, which is consistent with the fact that the images show the digits sideways.
        The statistics are reported in \Cref{tbl:svm}, and show that TRDH-PSB, iTRDH-PSB, TRDH-Andrei and iTRDH-Andrei as main solvers exceed the maximum number of iterations.
        R2, TRDH-Spec and iTRDH-Spec are the most efficient and perform similar number of objective and gradient evaluations, with R2 terminating with a slightly higher criticality measure.
        TR-TRDH-PSB, TR-iTRDH-Andrei and TR-TRDH-Spec perform fewer objective and gradient evaluations, but more proximal operator evaluations than TR-R2.

      \subsection{FitzHugh-Nagumo inverse problem}

        Let \(v(x) = (v_1(x), \ldots, v_{n+1}(x))\) and \(w(x) = (w_1(x), \ldots, w_{n+1}(x))\) be sampled values of \(V(t; x)\) and \(W(t; x)\) for \(x \in \R^5\) satisfying the \citet{fitzhugh-1995} and \citet{nagumo-arimoto-1962} model for neuron activation
        \begin{equation}
          \label{eq:fh-nagumo}
          \frac{\mathrm{d} V}{\mathrm{d} t} = (V - V^3/3 - W + x_1) x_2^{-1}, \quad \frac{\mathrm{d} W}{\mathrm{d} t} = x_2 (x_3 V  - x_4 W + x_5).
        \end{equation}
        The samples are generated using a discretization of the time interval \(t \in [0, 20]\) with initial conditions \((V(0), W(0)) = (2, 0)\).
        We define a target solution that corresponds to a simulation of the \citet{van-der-pol-1926} oscillator by generating solutions \((\bar v (x), \bar w (x))\) of~\eqref{eq:fh-nagumo} with \(\bar x = (0, 0.2, 1, 0, 0)\), and we solve
        \begin{equation}%
          \label{eq:fh}
          \minimize{x} \tfrac{1}{2} \| (v(x) - \bar v (\bar x), w(x) - \bar w (\bar x) )\|_2^2 + h(x),
        \end{equation}
        where \(h(x) = \lambda \|x\|_0\) with \(n = 100\).
        The stopping tolerances \(\epsilon_a\) and \(\epsilon_r\) are set to \(10^{-4}\).
        We report the statistics of the solution of~\eqref{eq:fh} with \(\lambda = 10\) in \Cref{tbl:fh}.
        TRDH as main solver results in many objective and gradient evaluations compared to TR\@.
        Moreover, TR-TRDH-PSB, TR-iTRDH-PSB, TR-TRDH-Andrei and TR-iTRDH-Andrei perform fewer objective and gradient evaluations than TR-R2.
        TR-TRDH-Spec is the least efficient.

        \begin{table}[ht]
  \centering
  \small
  \caption{%
  \label{tbl:fh}
  FH~\eqref{eq:fh} statistics with $h = \lambda \|\cdot\|_0$.
  All variants of TR use an LBFGS Hessian approximation, and are given a maximum of \(200\) inner iterations.
  The optimal objective value is $f(x_\star) = 1.03e+00$.}
  \begin{tabular}{rrrrrrrrr}
    \hline\hline
    solver & $f(x)$ & $h(x)/\lambda$ & $\sqrt{\xi / \nu}$ & $\|x-x_\star\|_2$ & $\#f$ & $\#\nabla f$ & $\#\prox{}$ & $t$ ($s$) \\\hline
    TRDH-Spec & \(3.21\)e\(+01\) & \(5\) & \(7.0\)e\(+00\) & \(9.8\)e\(-01\) & \(501\) & \(351\) & \(1000\) & \(2.1\)e\(+00\) \\
    iTRDH-Spec & \(3.21\)e\(+01\) & \(5\) & \(5.9\)e\(+00\) & \(9.8\)e\(-01\) & \(501\) & \(351\) & \(500\) & \(2.5\)e\(+00\) \\
    TRDH-PSB & \(1.12\)e\(+00\) & \(2\) & \(2.7\)e\(+00\) & \(1.3\)e\(-01\) & \(501\) & \(401\) & \(1000\) & \(4.7\)e\(+00\) \\
    iTRDH-PSB & \(1.12\)e\(+00\) & \(2\) & \(2.4\)e\(+00\) & \(1.3\)e\(-01\) & \(501\) & \(401\) & \(500\) & \(4.7\)e\(+00\) \\
    TRDH-Andrei & \(1.15\)e\(+00\) & \(2\) & \(1.6\)e\(+00\) & \(1.5\)e\(-01\) & \(501\) & \(372\) & \(1000\) & \(4.2\)e\(+00\) \\
    iTRDH-Andrei & \(1.15\)e\(+00\) & \(2\) & \(2.8\)e\(+00\) & \(1.5\)e\(-01\) & \(501\) & \(372\) & \(500\) & \(5.3\)e\(+00\) \\
    TR-R2 & \(1.02\)e\(+00\) & \(2\) & \(5.0\)e\(-03\) & \(6.3\)e\(-03\) & \(327\) & \(236\) & \(36274\) & \(6.7\)e\(+00\) \\
    TR-TRDH-PSB & \(1.02\)e\(+00\) & \(2\) & \(5.7\)e\(-03\) & \(6.1\)e\(-03\) & \(183\) & \(149\) & \(56042\) & \(3.8\)e\(+00\) \\
    TR-iTRDH-PSB & \(1.02\)e\(+00\) & \(2\) & \(6.3\)e\(-03\) & \(6.2\)e\(-03\) & \(211\) & \(179\) & \(33452\) & \(3.7\)e\(+00\) \\
    TR-TRDH-Andrei & \(1.02\)e\(+00\) & \(2\) & \(5.6\)e\(-03\) & \(6.5\)e\(-03\) & \(185\) & \(138\) & \(49311\) & \(2.9\)e\(+00\) \\
    TR-iTRDH-Andrei & \(1.02\)e\(+00\) & \(2\) & \(6.4\)e\(-03\) & \(6.1\)e\(-03\) & \(168\) & \(151\) & \(23655\) & \(2.9\)e\(+00\) \\
    TR-TRDH-Spec & \(1.04\)e\(+00\) & \(2\) & \(1.0\)e\(+00\) & \(7.3\)e\(-02\) & \(501\) & \(428\) & \(109750\) & \(9.0\)e\(+00\) \\
    TR-iTRDH-Spec & \(1.02\)e\(+00\) & \(2\) & \(4.6\)e\(-03\) & \(1.1\)e\(-02\) & \(383\) & \(337\) & \(40354\) & \(6.1\)e\(+00\) \\\hline\hline
  \end{tabular}
\end{table}

        \begin{table}[ht]
          \centering
          \small
          \caption{%
            \label{tbl:fh-sol}
            FH~\eqref{eq:fh} (left) and constrained FH~\eqref{eq:fh-cstr} (right) solutions identified by the solvers tested.
            The unconstrained solution is \((0, 0.2, 1, 0, 0)\).
          }
          \begin{tabular}{r|rrrrr||rrrrr}
            \hline\hline
            & $x_1$ & $x_2$ & $x_3$ & $x_4$ & $x_5$
            & $x_1$ & $x_2$ & $x_3$ & $x_4$ & $x_5$ \\\hline
            TRDH-Spec & 0.32 & 0.43 & 0.83 & 0.77 & 0.44 & 0.00 & 0.50 & 0.54 & 0.00 & 0.00 \\
            iTRDH-Spec & 0.32 & 0.43 & 0.83 & 0.77 & 0.44 & 0.00 & 0.50 & 0.54 & 0.00 & 0.00 \\
            TRDH-PSB & 0.00 & 0.24 & 0.87 & 0.00 & 0.00 & 0.00 & 0.50 & 0.54 & 0.00 & 0.00 \\
            iTRDH-PSB & 0.00 & 0.24 & 0.87 & 0.00 & 0.00 & 0.00 & 0.50 & 0.54 & 0.00 & 0.00 \\
            TRDH-Andrei & 0.00 & 0.25 & 0.86 & 0.00 & 0.00 & 0.00 & 0.50 & 0.54 & 0.00 & 0.00 \\
            iTRDH-Andrei & 0.00 & 0.25 & 0.86 & 0.00 & 0.00 & 0.00 & 0.50 & 0.54 & 0.00 & 0.00 \\
            TR-R2 & 0.00 & 0.20 & 1.01 & 0.00 & 0.00 & 0.00 & 0.50 & 0.54 & 0.00 & 0.00 \\
            TR-TRDH-PSB & 0.00 & 0.20 & 1.01 & 0.00 & 0.00 & 0.00 & 0.50 & 0.54 & 0.00 & 0.00 \\
            TR-iTRDH-PSB & 0.00 & 0.20 & 1.01 & 0.00 & 0.00 & 0.00 & 0.50 & 0.54 & 0.00 & 0.00 \\
            TR-TRDH-Andrei & 0.00 & 0.20 & 1.01 & 0.00 & 0.00 & 0.00 & 0.50 & 0.54 & 0.00 & 0.00 \\
            TR-iTRDH-Andrei & 0.00 & 0.20 & 1.01 & 0.00 & 0.00 & 0.00 & 0.50 & 0.54 & 0.00 & 0.00 \\
            TR-TRDH-Spec & 0.00 & 0.18 & 1.07 & 0.00 & 0.00 & 0.00 & 0.50 & 0.54 & 0.00 & 0.00 \\
            TR-iTRDH-Spec & 0.00 & 0.20 & 1.01 & 0.00 & 0.00 & 0.00 & 0.50 & 0.54 & 0.00 & 0.00 \\\hline\hline
          \end{tabular}
        \end{table}

        \begin{figure}[ht]%
          \centering
          \includetikzgraphics[width = 0.48\linewidth]{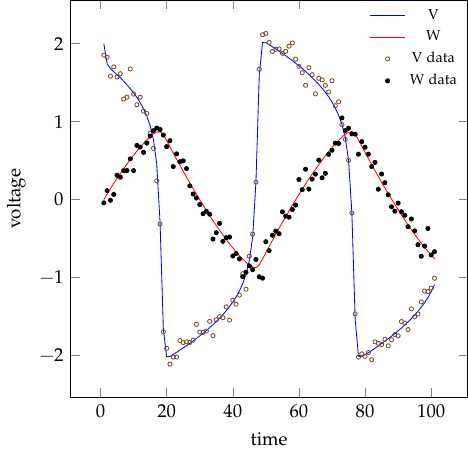}
          \hfill
          \includetikzgraphics[width = 0.48\linewidth]{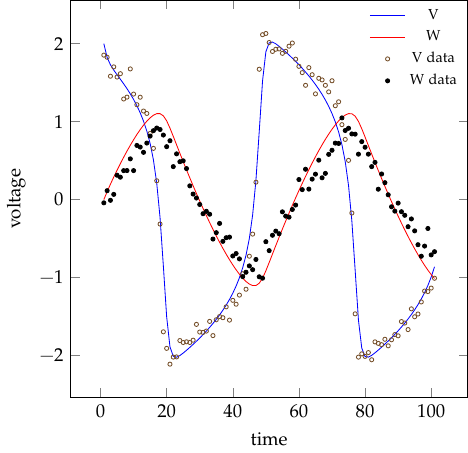}
          \caption{%
            \label{fig:fh}
            Solution of~\eqref{eq:fh} (left) and of~\eqref{eq:fh-cstr} (right) with TR-TRDH-PSB and sampled values of \(V\) and \(W\).
          }
        \end{figure}

        The left half of \Cref{tbl:fh-sol} reports the final solution identified by each solver.
        The solution of~\eqref{eq:fh} with TR-TRDH-PSB and the sampled values of \(V\) and \(W\) are displayed in \Cref{fig:fh}.
        We do not show results with R2 because it encountered numerical issues when solving the problem.

        We also solve the constrained variant
        \begin{equation}%
          \label{eq:fh-cstr}
          \minimize{x} \tfrac{1}{2} \| (v(x) - \bar v (\bar x), w(x) - \bar w (\bar x) )\|_2^2 + h(x), \quad \st \ x_2 \ge 0.5,
        \end{equation}
        and we keep all parameters the same, except for \(\lambda\) that we set to \(40\) to further enforce sparsity.

        \begin{table}[ht]
  \centering
  \small
  \caption{%
  \label{tbl:fh-cstr}
  Constrained FH~\eqref{eq:fh-cstr} statistics with $h = \lambda \|\cdot\|_1$.
  All variants of TR use an LBFGS Hessian approximation, and are given a maximum of \(200\) inner iterations.}
  \begin{tabular}{rrrrrrrrr}
    \hline\hline
    solver & $f(x)$ & $h(x)/\lambda$ & $\xi$ & $\|x-x_\star\|_2$ & $\#f$ & $\#\nabla f$ & $\#\prox{}$ & $t$ ($s$) \\\hline
    TRDH-Spec & \(4.43\)e\(+00\) & \(1.0\)e\(+00\) & \(1.2\)e\(-02\) & \(5.5\)e\(-01\) & \(327\) & \(226\) & \(653\) & \(2.6\)e\(+00\) \\
    iTRDH-Spec & \(4.43\)e\(+00\) & \(1.0\)e\(+00\) & \(1.1\)e\(-02\) & \(5.5\)e\(-01\) & \(328\) & \(226\) & \(327\) & \(2.4\)e\(+00\) \\
    TRDH-PSB & \(4.43\)e\(+00\) & \(1.0\)e\(+00\) & \(8.2\)e\(-03\) & \(5.5\)e\(-01\) & \(331\) & \(237\) & \(661\) & \(2.6\)e\(+00\) \\
    iTRDH-PSB & \(4.43\)e\(+00\) & \(1.0\)e\(+00\) & \(7.7\)e\(-03\) & \(5.5\)e\(-01\) & \(332\) & \(237\) & \(331\) & \(2.2\)e\(+00\) \\
    TRDH-Andrei & \(4.43\)e\(+00\) & \(1.0\)e\(+00\) & \(7.1\)e\(-03\) & \(5.5\)e\(-01\) & \(223\) & \(155\) & \(445\) & \(1.4\)e\(+00\) \\
    iTRDH-Andrei & \(4.43\)e\(+00\) & \(1.0\)e\(+00\) & \(7.1\)e\(-03\) & \(5.5\)e\(-01\) & \(224\) & \(155\) & \(223\) & \(1.6\)e\(+00\) \\
    TR-R2 & \(4.43\)e\(+00\) & \(1.0\)e\(+00\) & \(4.9\)e\(-03\) & \(5.5\)e\(-01\) & \(47\) & \(31\) & \(3057\) & \(4.4\)e\(-01\) \\
    TR-TRDH-PSB & \(4.43\)e\(+00\) & \(1.0\)e\(+00\) & \(2.7\)e\(-04\) & \(5.5\)e\(-01\) & \(38\) & \(29\) & \(6725\) & \(3.1\)e\(-01\) \\
    TR-iTRDH-PSB & \(4.43\)e\(+00\) & \(1.0\)e\(+00\) & \(7.9\)e\(-05\) & \(5.5\)e\(-01\) & \(36\) & \(27\) & \(3326\) & \(3.3\)e\(-01\) \\
    TR-TRDH-Andrei & \(4.43\)e\(+00\) & \(1.0\)e\(+00\) & \(8.7\)e\(-05\) & \(5.5\)e\(-01\) & \(39\) & \(30\) & \(7438\) & \(3.8\)e\(-01\) \\
    TR-iTRDH-Andrei & \(4.43\)e\(+00\) & \(1.0\)e\(+00\) & \(1.6\)e\(-03\) & \(5.5\)e\(-01\) & \(36\) & \(27\) & \(3117\) & \(3.4\)e\(-01\) \\
    TR-TRDH-Spec & \(4.43\)e\(+00\) & \(1.0\)e\(+00\) & \(4.1\)e\(-03\) & \(5.5\)e\(-01\) & \(36\) & \(27\) & \(5467\) & \(3.9\)e\(-01\) \\
    TR-iTRDH-Spec & \(4.43\)e\(+00\) & \(1.0\)e\(+00\) & \(1.8\)e\(-03\) & \(5.5\)e\(-01\) & \(38\) & \(25\) & \(3347\) & \(2.5\)e\(-01\) \\\hline\hline
  \end{tabular}
\end{table}

        The right half of \Cref{tbl:fh-sol} reports the solution and \Cref{tbl:fh-cstr} the statistics of the solve of~\eqref{eq:fh-cstr} with the tested solvers.
        Note that all solvers identify the same solution.
        We observe that all TR-TRDH and TR-iTRDH solvers perform fewer objective and gradient evaluations than TR-R2.
        However, TR-R2 has the lowest number of proximal operator calls out of all solvers using TR. 
        The right plot of \Cref{fig:fh} shows that the \(W\) part of the solution of~\eqref{eq:fh-cstr} with TR-TRDH-PSB does not match the data as tightly as that of~\eqref{eq:fh} in the left half of \Cref{fig:fh}.
        That is a consequence of enforcing \(x_2 \ge 0.5\), since the unconstrained solution verifies \(x_2 = 0.2\).

      \section{Discussion and future work}%
      \label{sec:discussion}

      The \(\ell_0\)- and \(\ell_1\)-norm regularizers are standard choices to promote sparsity.
      It is possible to derive the \(\iprox\) of other useful separable regularizers, including \(\ell_p\) pseudonorms to the \(p\)-th power, i.e.,
      \[
        h(x) = \|x\|_p^p = \sum_{i=1}^n |x_i|^p
        \quad (0 < p < 1),
      \]
      and those are also useful to promote sparsity \citep{cao-sun-xu-2013}.
      Much remains to be done, however, including deriving the \(\iprox\) of relevant non-separable regularizers, and studying other diagonal Hessian approximations than the ones considered above, including new diagonal quasi-Cauchy updates.

      TRDH performs well on the problems tested and is promising, but we were surprised to see the spectral gradient update often perform better than more sophisticated diagonal quasi-Newton updates, especially when using TRDH as main solver.
      Future research should seek to provide an explanation for that observation.
      In several instances, the variant of TRDH denoted ``iTRDH'' performs better than the basic version, which indicates that both algorithms are relevant.


      Finally, other solvers would likely benefit from using TRDH as a subproblem solver, including the methods for least-squares \(f\) of \citet{aravkin-baraldi-orban-2022b}.

      \small
      \subsection*{Acknowledgements}

      The authors thank Joshua Wolff from \'Ecole Normale Sup\'erieure des Techniques Avanc\'ees (ENSTA), Paris, for the work he conducted during his undergraduate internship  at GERAD in the summer of 2022 and that made this research possible.

      \bibliographystyle{abbrvnat}
      \bibliography{abbrv,indef-pg}

  \end{document}